\documentclass[a4paper,11pt,reqno]{amsart}

\usepackage{fontenc}
\usepackage[english]{babel}
\usepackage{csquotes}

\usepackage{amsmath, amsfonts, amssymb, amsthm}
\usepackage{array}
\usepackage{bm}
\usepackage{braket}
\usepackage{derivative}
\usepackage{mathtools}
\usepackage{nicematrix}
\usepackage{paralist}
\usepackage{scalefnt}
\usepackage{tensor}
\usepackage{thmtools}
\usepackage{youngtab}

\usepackage[dvipsnames]{xcolor}
\usepackage{graphicx}
\usepackage[textwidth=0.8in]{todonotes}
\usepackage[normalem]{ulem}
\usepackage{extpfeil}
\definecolor{darkred}{RGB}{192,0,0}

\usepackage[colorlinks]{hyperref}
\hypersetup{
	colorlinks=true,
	linkcolor=orange!80!black,
	filecolor=magenta,      
	urlcolor=RoyalBlue,
	citecolor=RoyalBlue,
}
\usepackage[nameinlink]{cleveref}

\usepackage{tabularx}
\usepackage{adjustbox}
\usepackage{nicematrix}
\usepackage{listings}
\usepackage{varwidth}

\usepackage{tikz}
\usepackage{tikz-cd}
\usepackage{tikz-3dplot}
\usetikzlibrary{cd, arrows, matrix, positioning}

\usepackage[left=1in, right=1in, top=1in, bottom=1in, includefoot, headheight=13.6pt]{geometry}
\usepackage[font=small,margin=1.5cm]{caption}
\allowdisplaybreaks

\usepackage{csquotes}
\usepackage[
backend=biber,
style=alphabetic,
maxnames=99,
giveninits=true,
uniquename=init,
doi=false,isbn=false,url=false,eprint=false
]{biblatex}
\DeclareNameAlias{sortname}{given-family}
\DeclareNameAlias{default}{given-family}

\DeclareFieldFormat[article]{volume}{\textbf{#1}} %
\DeclareFieldFormat[article]{number}{no.~#1} %
\renewbibmacro*{volume+number+eid}{%
	\printfield{volume}%
	\setunit{\addspace}%
	\printtext[parens]{\printfield{year}}%
	\setunit{\addcomma\space}%
	\printfield{number}%
	\setunit{\addcomma\space}%
	\printfield{eid}%
}
\renewbibmacro*{issue+date}{} %

\AtEveryBibitem{%
	\ifentrytype{article}{\renewbibmacro{in:}{}}{}%
}

\usepackage{enumitem} %

\numberwithin{equation}{section}
\newtheorem{theorem}[equation]{Theorem}
\numberwithin{theorem}{section}
\newtheorem{lemma}[theorem]{Lemma}
\newtheorem{corollary}[theorem]{Corollary}
\newtheorem{proposition}[theorem]{Proposition}

\theoremstyle{definition}
\newtheorem{definition}[theorem]{Definition}

\newtheorem{remarkx}[theorem]{Remark}
\newenvironment{remark}
{\pushQED{\qed}\remarkx}
{\popQED\endremarkx}
\newtheorem{example}[theorem]{Example}
\newtheorem{algorithm}[theorem]{Algorithm}
\Crefname{assumption}{Assumption}{Assumptions}

\newcommand{\bbC}{\mathbb{C}}

\newcommand{\bbN}{\mathbb{N}}

\newcommand{\bbP}{\mathbb{P}}

\newcommand{\mfS}{\mathfrak{S}}

\newcommand{\mfm}{\mathfrak{m}}

\DeclareMathOperator{\brk}{brk}
\DeclareMathOperator{\Cat}{Cat}
\DeclareMathOperator{\CenOp}{Cen}

\DeclareMathOperator{\End}{End}

\DeclareMathOperator{\Hess}{Hess}
\DeclareMathOperator{\Hilb}{Hilb}
\DeclareMathOperator{\Hom}{Hom}

\DeclareMathOperator{\id}{id}
\DeclareMathOperator{\im}{im}

\DeclareMathOperator{\m}{m}

\DeclareMathOperator{\Proj}{Proj}

\DeclareMathOperator{\rk}{rk}

\DeclareMathOperator{\Soc}{soc}

\DeclareMathOperator{\Spec}{Spec}

\newcommand\restr[2]{{ %
		\left.\kern-\nulldelimiterspace #1 
		\vphantom{\big|} 
		\right|_{#2} 
}}
\newcommand{\transpose}[1]{\tensor[^{\mathrm{t}}]{#1}{}}
\renewcommand{\epsilon}{\varepsilon}
\newcommand{\cen}[1]{\CenOp_{#1}}
\newcommand{\bul}{%
	\mathrel{%
		\mathchoice
		{\raisebox{0.2ex}{\scalebox{0.6}{$\bullet$}}}
		{\raisebox{0.2ex}{\scalebox{0.6}{$\bullet$}}}
		{\raisebox{0.15ex}{\scalebox{0.5}{$\bullet$}}}
		{\raisebox{0.1ex}{\scalebox{0.4}{$\bullet$}}}%
	}%
}
\newcommand{\buls}{\bul\cdots\bul}

\title{Polynomials of minimal border rank}
\author{Cosimo Flavi}
\address[Cosimo Flavi]{Instytut Matematyczny Polskiej Akademii Nauk,
ul.~Śniadeckich 8, 00-656 Warsaw, Poland.}
\email{cflavi@impan.pl}
\author{Weronika Obcowska}
\address[Weronika Obcowska]{
	Wydział Matematyki, Informatyki i Mechaniki, Uniwersytet Warszawski,
	ul.~Stefana Banacha
	2, 02-097 Warsaw, Poland.
}
\email{wo430002@students.mimuw.edu.pl}
\author{Tim Seynnaeve}
\address[Tim Seynnaeve]{Instytut Matematyczny Polskiej Akademii Nauk,
ul.~Śniadeckich 8, 00-656 Warsaw, Poland.}
\email{tseynnaeve@impan.pl}

\keywords{Additive decompositions, symmetric tensor rank, border rank, Gorenstein algebras, centroids}

\subjclass{Primary 14N07, 15A69, 13E10}

\begin{document}

\begin{abstract}
We use the correspondence between iterated multiplication tensors of Gorenstein algebras and homogeneous polynomials of minimal smoothable rank to classify polynomials of minimal border rank of sufficiently high degree in up to 7 variables.
\end{abstract}

\maketitle

\section{Introduction} \thispagestyle{empty}

The \emph{Waring rank} of a degree $d$ homogeneous polynomial $f$ %
is the smallest number $r$ such that $f$ can be written as a sum $f=\sum_{i=1}^{r}{\ell_i^d}$, where $\ell_1,\ldots,\ell_r$ are linear forms. The problem of finding such Waring decompositions traces back to the 19th century \cite{Hou77,Leb59,Luc76}, and has applications in signal processing \cite{Com94,Gre+02,SGB00,Sid+17} and complexity theory \cite{Lan17,Str83,Val01,Wig19}. Homogeneous polynomials are synonymous with symmetric tensors, and as such Waring rank is a symmetric analogue to tensor rank. In this paper, %
the terminology \emph{rank} will always refer to Waring rank (see also \Cref{rem:Comon}).

It may happen that a polynomial $f$ can be arbitrarily closely approximated by polynomials of Waring rank $f$, while $f$ itself has larger Waring rank. This gives rise to the notion of \emph{border Waring rank}. Geometrically, the set of polynomials of border rank at most $r$ is the $r$-th secant variety to a Veronese variety. For more details, see \cite{Ber+18,BGI11,CGO14,Landsberg12,LO13}. %

A polynomial in $n$ variables is \emph{concise} if it cannot be written in fewer than $n$ variables after a linear change of coordinates. It is an easy fact that a concise polynomial must have border rank at least $n$, and polynomials that achieve this bound are said to have \emph{minimal border rank}. Minimal border rank polynomials in up to $5$ variables were classified in 2010 by Landsberg and Teitler \cite{LT10}. However, as we discuss below, the classification they presents seems to contain some inaccuracies.

In this paper, we introduce a novel approach to classifying polynomials of minimal border rank, building on earlier work present in the literature. The key ideas are the following:
\begin{enumerate}
	\item The relation between border rank and \emph{smoothable rank}, as studied in~\cite{BB14}: when $d \geq r-1$, being of border rank $r$ is equivalent to lying in the linear span of a degree $r$ smoothable scheme embedded in the Veronese variety~\cite[Proposition 2.5]{BB14}. 
	\item The only concise tensors in $S^d\bbC^n$ that lie in the linear span of a degree $n$ subscheme of the Veronese are evaluation tensors of Gorenstein algebras. This is \Cref{thm:multTensorsAndLinearSpanOfSpec}, which is a symmetric version of the cactus apolarity lemma in~\cite{JJ26}, see also~\cite[Lemma 5.5]{DM26}. %
	\item The notion of the \emph{centroid} for bilinear maps (or equivalently: order $3$ tensors) was introduced in~\cite{Myasnikov} and further developed in~\cite{Wil12}. They appear in~\cite{JLP24} under the name of 111-algebras. Centroids can also be defined for tensors of arbitrary order, see~\cite{BMW20,CFJ25}. Given an evaluation tensor, we can recover the corresponding algebra by taking the centroid. We use this to show that two Gorenstein algebras have isomorphic iterated multiplication tensors if and only if the algebras are isomorphic (see \Cref{proposition:centroid_isomorphic_A}).
\end{enumerate}
Putting these ideas together, we find that as long as $d \geq n-1$, classifying minimal border rank polynomials in $S^d\bbC^n$ is equivalent to classifying $n$-dimensional smoothable Gorenstein algebras. For $n \leq 7$, there are only finitely many $n$-dimensional algebras up to isomorphism; they have been classified in \cite{Casnati}. This allows us in particular to recover (and correct) the lists of minimal border rank polynomials for $n \leq 5$ presented in \cite{LT10}, except for the case $S^3\bbC^5$, and to obtain a classification in the cases $n=6,7$ and $d\geq n-1$. The resulting list is presented in \Cref{tab:MainTable}. For the remaining cases $S^3\bbC^5$, $S^3\bbC^6$, $S^4\bbC^6$, $S^3\bbC^7$, $S^4\bbC^7$, $S^5\bbC^7$ we expect there to be additional minimal border rank polynomials, essentially because here smoothable and border ranks do not agree. One of these, already determined in \cite{JLP24}, is presented in \Cref{rmk:S3C5}. We leave the remaining cases for future work. 

As soon as $n \geq 8$, there are infinitely many nonisomorphic smoothable Gorenstein algebras of dimension $n$. This implies that, for any $n \geq 8$ and $d \geq 3$, there are infinitely many nonequivalent polynomials of minimal border rank of degree $d$ in $n$ variables. Note that this runs contrary to what one would expect based on a dimension count \cite[Remark 10.6]{LT10}. We provide an explicit such family in \Cref{ex:elliptic}. In fact, \cite{Casnati} also provides the classifications of $8$- and $9$-dimensional algebras, which involve infinite families depending on a parameter. Hence we can extend our classification of minimal border rank polynomials to $n \leq 9$. 

In \cite{JJ26} it is shown that a polynomial is the evaluation tensor of a Gorenstein algebra if and only if it is \emph{$1$-generic} and has a centroid of sufficiently high dimension. In the final subsection of the paper, we turn this characterization into an algorithm that can verify whether or not a given polynomial is the evaluation tensor of a Gorenstein algebra. Applying this procedure to \cite[Theorem 10.4]{LT10} reveals that two of the polynomials listed there in fact do not have minimal border rank, see \Cref{example:dim4table}.

\subsection*{Supplementary material} This paper is accompanied by supplementary material, consisting of code for the software Macaulay2~\cite{MS}. The code contains two algorithms, the first of which computes the evaluation tensor of a given Gorenstein algebra. Applying this algorithm to the list in~\cite{Casnati}, we obtain our classification of polynomials of minimal border rank. The second algorithm verifies whether a given polynomial is an evaluation tensor. For a more detailed description of the supplementary material, we refer to \autoref{Appendix}.

\subsection*{Acknowledgments}
The authors would like to thank Joachim Jelisiejew for suggesting to investigate this problem and for helpful discussions, and Jaros\l aw Buczy\'nski for helpful comments.

\subsection*{Notation and conventions} We work over the complex numbers $\bbC$. All vector spaces we consider are finite-dimensional until explicitly stated otherwise. The linear span of a subset $S$ of a vector space will be denoted $\langle S \rangle$. All algebras we consider are unital, associative, commutative $\bbC$-algebras.%

\section{Preliminaries}
\subsection{Symmetric tensors and their border ranks}
For any $n$-dimensional vector space $V$, a tensor $T\in V^{\otimes d}$ is \textit{decomposable} if $T=v_1\otimes\cdots\otimes v_d$ for some vectors $v_1,\dots,v_d\in V$. The \text{rank} of a tensor is the minimal number $r$ such that it can be written as a sum of $r$ decomposable tensors. In the case of a symmetric tensor $f\in S^dV$, the \textit{symmetric rank} of $f$ is the minimal number of tensor powers $v^{\otimes d}$ whose sum is equal to $f$.
Once a basis of an $n$-dimensional vector space $V$ is fixed, any symmetric tensor in $S^dV$ can be considered as a homogeneous polynomial of degree $d$ in $n$ variables. This is due to a canonical isomorphism between the $d$-th symmetric power of $V$ and the space $\bbC[V^*]_d$ of polynomial functions of degree $d$ on the dual space $V^*$ (see, e.g., \cite[Section 3.1]{CGLM08}). The Waring rank and symmetric rank are equivalent notions. %
For any $d\in\bbN$ we denote by $\nu_{d}$ the $d$-Veronese embedding
\[
	\nu_{d}\colon\bbP V \to \bbP(S^dV),\qquad
	[\ell] \mapsto {[{\ell}^d]},
\]
and $\nu_{d}(\bbP V)$ is the \textit{$d$-Veronese variety}. Clearly, we have $S^dV=\langle \nu_{d}(\bbP V)\rangle$.
Consider for every $r\in\bbN$ the set 
\[
\sigma_{r}^{\circ}\bigl(\nu_{d}(\bbP V)\bigr)=\Set{[g]\in \bbP(S^dV)|\rk g\leq r}.
\]
The \textit{border rank} of a polynomial $f\in S^dV$, denoted by $\brk f$, is the minimum  number $r\in\bbN$ such that $\smash{f\in\sigma_r(\nu_{d}\bigl(\bbP V)\bigr)\coloneqq\overline{\sigma_r^{\circ}(\nu_{d}(\bbP V))}}$, where by overline we mean the Zariski closure. 
We recall that the closure of $\sigma_{r}^{\circ}\bigl(\nu_{d}(\bbP V)\bigr)$ in the Zariski topology equals its closure  in the Euclidean topology (see \cite[Theorem 2.33]{Mum95}). Thus, we can think of the border rank of a polynomial $f\in S^d V$ as the minimal number $r\in\bbN$ such that
\[
f=\lim_{t\to 0}\sum_{j=1}^r\ell_j^d(t),
\]
where $\{\ell_j(t)\}_{t\in\bbC}$ is a family of linear forms for every $j=1,\dots,r$. 

We recall that the $i$-th flattening of a tensor $T\in V_1\otimes \cdots\otimes V_d$ is the associated linear map \[
T_i\colon V_i^*\to V_1\otimes\cdots\otimes V_{i-1}\otimes V_{i+1}\otimes\cdots\otimes V_d,
\]
for every $i=1,\dots,n$. A tensor $T\in V_1\otimes \cdots\otimes V_d$ is \textit{concise} if all the flattenings of $T$ are injective. 
If $T\in S^dV$, then $T_i=T_j$ for every $i,j=1,\dots,n$ and their image is contained in $S^{d-1}V\subseteq V^{\otimes d-1}$. In other words, a symmetric tensor has only one flattening $V^*\to S^{d-1}V$. The dual map $S^{d-1}V^*\to V$ is called the \emph{dual flattening}; clearly a symmetric tensor is concise if and only if its dual flattening is surjective.
The image of the dual flattening is the minimal subspace $U \subseteq V$ such that $T \in S^dU$, in particular a symmetric tensor is concise if there exists no proper subspace $U \subsetneq V$ such that $T \in S^d U$.

\begin{remark}\label{rmk:catalecticant} Flattenings of symmetric tensors have a natural interpretation in the laguage of polynomials. %
To fix notation, let $\{x_1,\dots,x_n\}$ be a basis of $V$, and denote the dual basis by $\{\alpha_1,\dots,\alpha_n\}$. %
In this notation, for a homogeneous polynomial $f\in\bbC[x_1,\dots,x_n]_d\cong S^dV$, its flattening is the gradient map
\[
	\nabla_{\!f}\colon\bbC[\alpha_1,\dots,\alpha_n]_1  \to  \bbC[x_1,\dots,x_n]_{d-1},\qquad
	\alpha_i  \mapsto {\displaystyle\pdv{f}{x_i}} =: \partial_if.
\]
In particular, a polynomial $f \in\bbC[x_1,\dots,x_n]_d$ is {concise} if and only if $\dim \langle \partial_1f,\ldots, \partial_nf \rangle = n$. %
\end{remark}
We can also consider more general matrixizations $S^kV^*\to S^{d-k}V$.  These correspond to the \textit{catalecticant maps}
\[\Cat_f^{\smash{(k)}}\colon \bbC[\alpha_1,\dots,\alpha_n]_k\to \bbC[x_1,\dots,x_n]_{d-k}\] defined on monomials by the relation
\[
\Cat_f^{\smash{(k)}}(\alpha_1^{j_1}\cdots \alpha_n^{j_n})=\partial_1^{j_1}\cdots \partial_n^{j_n} f.
\]
In particular, the dual flattening is given by the $(d-1)$-th %
catalecticant map.
In general, we have an inequality
\[
\rk f\geq\rk\bigl(\Cat_f^{\smash{(k)}}\bigr)
\]
for every $k\in\bbN$ (see, e.g., \cite[Proposition 2.43]{Fla25a} for a quick proof). This implies that, whenever a form $f\in S^dV$ is concise, we have $\rk f\geq n$ and, by upper semicontinuity of rank, also $\brk f\geq n$. When $\brk f=n$ we say that $f$ has \textit{minimal border rank}. 

\begin{remark} \label{rem:Comon}
	\emph{Comon's conjecture} states that the Waring rank of a symmetric tensor equals its tensor rank, and an analoguous statement for border Waring and border tensor rank has also been conjectured. While these conjectures are open in general, it has been proven in \cite{MV24} that in the regime we consider (concise tensors in $S^d\bbC^n$, with $d \geq n-1$), minimal border tensor rank is equivalent to minimal border Waring rank. 
\end{remark}

\begin{remark} \label{rmk:divided_power_basis}
	The natural pairing $V^* \otimes V \to \bbC$ induces a pairing $S^dV^* \otimes S^dV \to \bbC$. After choosing a basis of $V$ as in \Cref{rmk:catalecticant}, this pairing becomes
	\[
	\bbC[\alpha_1,\dots,\alpha_n]_d \otimes \bbC[x_1,\dots,x_n]_{d} \to \bbC,\qquad
	\alpha_1^{j_1}\cdots \alpha_n^{j_n} \otimes f  \mapsto \partial_1^{j_1}\cdots \partial_n^{j_n} f.
	\]
	In particular, note that the monomial bases of the two spaces are not dual to each other. The dual basis to the monomial basis of $S^dV^*$ is given by \emph{divided power monomials}:
	\[
	x_1^{[j_1]} \cdots x_n^{[j_n]} \coloneqq \frac{x_1^{j_1} \cdots x_n^{j_n}}{j_1! \cdots j_n!}.
	\]
	See also \cite[Remark 3.1]{BB14}.
	When writing polynomials in practice, it will often be useful to write them in the divided power basis. 
\end{remark}

The goal of this paper is to classify symmetric tensors of minimal border rank up to equivalence. Here by \textit{equivalence} (also called \textit{isomorphism} in the literature, see e.g.~\cite{JLP24}) we mean the following:
\begin{definition}
Given %
tensors $T\in V_1\otimes\cdots\otimes V_d$ and $S\in W_1\otimes \cdots\otimes W_d$, we say that $S$ is a \textit{restriction} of $T$, and we write $S\leq T$, if there exist $d$ linear maps $F_i\colon V_i\to W_i$, for $i=1,\dots,d$, such that $S=(F_1\otimes\cdots\otimes F_d)(T)$. If both $S\leq T$ and $T\leq S$, then we say that $S$ and $T$ are \textit{equivalent}
and we write $S\sim T$. 
\end{definition} 
\begin{remark}
	If $T$ and $S$ are concise tensors, then $S$ is equivalent to $T$ if and only if there exist \emph{isomorphisms} $F_i:V_i \to W_i$ such that $S=(F_1\otimes\cdots\otimes F_d)(T)$. If furthermore $T$ and $S$ are symmetric, these isomorphisms can be chosen to be equal, i.e.\ two concise symmetric tensors $T \in S^dV$ and $S \in S^dW$ are equivalent if and only if there exists an isomorphism $F:V \to W$ such that $S=F^{\otimes d}(T)$. In the language of polynomials, two concise homogeneous polynomials are equivalent if they agree up to a linear change of variables. {We leave these statements for the reader to check.}
\end{remark}

\subsection{Gorenstein algebras}
For our purposes, we need to recall some basic notions of algebras. This exposition is mainly based on \cite[Section 2.1]{Jel22}, see also \cite[Chapter 21]{Eis95}. By the term \textit{algebra} we mean an associative commutative $\bbC$-algebra with identity element. The \textit{degree} of an algebra $A$ is its dimension as a $\bbC$-vector space, denoted by $\dim_{\bbC} A$. Analogously, for any module $M$, its $\bbC$-dimension $\dim_{\bbC} M$ is called the \textit{degree} of $M$. We always assume $A$ to be a \emph{finite} $\bbC$-algebra, i.e.\ $\dim_{\bbC} A < \infty$.
Note that this implies that the Krull dimension of $A$ is zero.

If $M$ is an $A$-module and $\dim_{\bbC} M<\infty$, then it is possible to define the \textit{dual module} of $A$ as the $A$-module $M^*\coloneqq\Hom_{\bbC}(M,\bbC)$, where the multiplication operation is defined by
\[
(a\cdot\varphi)(m)\coloneqq \varphi(a\cdot m),
\]
for all $\varphi\in M^*$, $a\in A$, and $m\in M$. In the particular case where $M=A$, the module $A^*$ is called the \textit{canonical} $A$-module, also denoted by $\omega_A$.
\begin{definition}
	An algebra $A$ is \textit{Gorenstein} if $\omega_A$ is cyclic or, equivalently, if $\omega_A\cong A$ as $A$-modules. In this case any $\varepsilon \in \omega_A$ such that $A \cdot \varepsilon = \omega_A$ is called a \emph{dual generator} of $A$. %
\end{definition} 
A finite algebra $A$ can be uniquely (up to isomorphism) written as a product of finite local algebras; these local algebras are exactly the localizations of $A$ at its maximal ideals (see for instance \cite[Theorem 8.7]{AM69}). Moreover, $A$ is Gorenstein if and only if all its localizations at maximal ideals are Gorenstein.
The following characterization of local Gorenstein algebras will be useful for our computations in \Cref{sec:classification}. Recall that for a local ring $A$ with maximal ideal $\mfm$, the \emph{socle} is defined to be the ideal $\Soc A \coloneqq \Set{a \in A | a\cdot \mfm = 0}$.
\begin{proposition} \label{prop:local_Gorenstein_socle}
	For a finite local $\bbC$-algebra $A$, the following are equivalent:
	\begin{enumerate}
		\item\label{it:locGor1} $A$ is Gorenstein;
		\item\label{it:locGor2} $\Soc{A}$ is a one-dimensional $\bbC$-vector space;
		\item\label{it:locGor3} $\Soc{A}$ is a principal ideal.
	\end{enumerate}
	If the above conditions hold, then $\epsilon \in \omega_A$ is a dual generator if and only if $\epsilon(\Soc{A}) \neq 0$. 
\end{proposition}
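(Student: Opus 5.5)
The plan is to prove \eqref{it:locGor2}$\Leftrightarrow$\eqref{it:locGor3} first, and then \eqref{it:locGor1}$\Leftrightarrow$\eqref{it:locGor2} together with the description of dual generators. Two standard facts about a finite local algebra $A$ with maximal ideal $\mfm$ will be used throughout. First, $\mfm$ is nilpotent, so every nonzero ideal $I$ meets $\Soc A$ nontrivially: take the largest $k$ with $\mfm^k I\neq 0$; then $0 \neq \mfm^k I \subseteq \Soc A\cap I$. Second, $\Soc A$ is annihilated by $\mfm$, hence it is an $A/\mfm\cong\bbC$-vector space, and its $A$-submodules are exactly its $\bbC$-subspaces.

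For \eqref{it:locGor2}$\Leftrightarrow$\eqref{it:locGor3}: if $\Soc A=\bbC s$, then $\Soc A = As$ is principal. Conversely, if $\Soc A = As$, then $As = \bbC s + \mfm s = \bbC s$, because $\mfm s = 0$. Hence $\dim_\bbC \Soc A \le 1$. It is nonzero by the first fact applied to $I = A$, since $A\neq 0$. So $\dim_\bbC \Soc A = 1$.

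For \eqref{it:locGor1}$\Rightarrow$\eqref{it:locGor2}: suppose $\omega_A\cong A$. The socle of $A$ corresponds to the submodule $\{\varphi\in\omega_A : \mfm\varphi=0\}$. A functional $\varphi$ satisfies $\mfm\varphi=0$ exactly when $\varphi(\mfm a)=0$ for all $a$, i.e.\ when $\varphi$ vanishes on $\mfm$. Thus this submodule is $(A/\mfm)^*$, which is one-dimensional. Since the isomorphism $\omega_A\cong A$ preserves socles, $\dim_\bbC\Soc A = 1$.

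For \eqref{it:locGor2}$\Rightarrow$\eqref{it:locGor1} and the dual generator criterion, write $\Soc A=\bbC s$ and take $\epsilon\in\omega_A$ with $\epsilon(s)\neq 0$. Consider the $A$-linear map $A\to\omega_A$, $a\mapsto a\cdot\epsilon$. Let $I$ be its kernel, an ideal of $A$. If $I\neq 0$, then by the first fact $I$ contains $\Soc A$, so $s\in I$. But then $0 = (s\cdot\epsilon)(1)=\epsilon(s)$, a contradiction. Hence the map is injective. Since $\dim_\bbC A=\dim_\bbC\omega_A$, it is an isomorphism, so $A\cong\omega_A$ and $\epsilon$ is a dual generator. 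Conversely, if $\epsilon(s)=0$, then for every $a = c + m$ with $c\in\bbC$ and $m\in\mfm$ we get $(a\cdot\epsilon)(s)=\epsilon(as)=c\,\epsilon(s)=0$. So every element of $A\cdot\epsilon$ vanishes on $s$, and $A\cdot\epsilon\neq\omega_A$. The same computation gives the criterion whenever $A$ is Gorenstein, since \eqref{it:locGor1} implies \eqref{it:locGor2}.

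The only step that needs care is the "every nonzero ideal meets the socle" lemma, which relies on nilpotency of $\mfm$. Nilpotency holds because $A$ is Artinian local, so by Nakayama's lemma the chain $\mfm^k$ stabilizes at $0$.
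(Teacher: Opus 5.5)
Your proof is correct. For (2)$\Leftrightarrow$(3) you use the paper's own argument: $As=\bbC s+\mfm s=\bbC s$ for $s\in\Soc A$. You also make explicit that $\Soc A\neq 0$. This is needed because the zero ideal is also principal, and the paper's one-line argument leaves it implicit.

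For (1)$\Leftrightarrow$(2) and the characterization of dual generators, the paper gives no argument of its own and simply cites \cite[Propositions 2.9 and 2.11]{Jel22}. You prove these parts directly:
\begin{itemize}
\item For (1)$\Rightarrow$(2), you identify the socle of $\omega_A$ with the functionals vanishing on $\mfm$, which form the one-dimensional space $(A/\mfm)^*$.
\item For (2)$\Rightarrow$(1) and the criterion, you show that $a\mapsto a\cdot\epsilon$ is injective whenever $\epsilon(s)\neq 0$. This uses the lemma that every nonzero ideal of an Artinian local ring meets the socle, followed by the dimension count $\dim_\bbC A=\dim_\bbC\omega_A$.
\item For the converse of the criterion, you note that every element of $A\cdot\epsilon$ kills $s$ when $\epsilon(s)=0$.
\end{itemize}

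The paper's route is shorter but outsources the substance. Yours is self-contained and uses only the nilpotency of $\mfm$ and linear algebra, at the cost of a few extra lines.
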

\begin{proof}
	The equivalence \eqref{it:locGor1} $\Leftrightarrow$ \eqref{it:locGor2} is contained in \cite[Proposition 2.9]{Jel22} and the final statement is \cite[Proposition 2.11]{Jel22}. The equivalence \eqref{it:locGor2} $\Leftrightarrow$ \eqref{it:locGor3} follows by writing $A \cong \bbC \oplus \mfm$: if $a \in \Soc{A}$, then $A\cdot a = (\bbC \oplus \mfm)\cdot a = \bbC\cdot A$.
\end{proof}

	\subsection{Hilbert schemes of points}
	The assignment $R \mapsto \Spec R$ gives a one-to-one correspondence between finite algebras of degree $r$ and zero-dimensional $\bbC$-schemes of degree $r$.
	The \emph{Hilbert scheme} $\Hilb_r(\bbP V)$ parameterizes subschemes of degree $r$ of a given projective space $\bbP V$. 
	{In what follows, the notation $\langle Z \rangle$ denotes the \emph{scheme-theoretic linear span} of a subscheme $Z \subseteq\bbP V $. This is the linear subscheme of $\bbP V$ defined by the degree $1$ part of the homogeneous ideal $I(Z)$ of $Z$.}

	A subscheme of $\bbP V$ of degree $r$ is smooth if it is a union of $r$ distinct points (i.e.\ $R=\bbC \times \cdots \times \bbC$), and \emph{smoothable} if it is a flat limit of smooth subschemes. Smoothable subschemes form an irreducible component of $\Hilb_r (\bbP V)$, known as the \emph{smoothable component}. 
	By definition, the symmetric rank of $[f] \in \bbP S^dV$ is the minimal $r$ such that $[f] \in \langle\nu_d(Z)\rangle$, where $Z \subseteq \bbP V$ is a smooth scheme of degree $r$. If we replace $Z$ by a smoothable degree $r$ scheme (respectively any degree $r$ scheme) we obtain the definition of \emph{smoothable rank} (respectively \emph{cactus rank}). 
	
	By taking limits, one can see that if $[f] \in \langle\nu_d(Z)\rangle$ for some smoothable degree $r$ scheme, then $[f] \in \sigma_r\bigl(\nu_d(V)\bigr)$. In other words, border rank is upper bounded by smoothable rank. In general the inequality might be strict; however, the following proposition shows it is an equality if $r \leq d+1$.
	\begin{proposition}[{\cite[Proposition 2.5]{BB14}}] \label{prop:BB14}
		If $r \leq d+1$, then 
		\[
		\sigma_r(\nu_d(\bbP V)) = \bigcup\{\langle\nu_d(Z)\rangle \mid Z \subset \bbP V \text{ smoothable of degree at most } r\}.
		\]
	\end{proposition}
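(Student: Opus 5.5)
The inclusion ``$\supseteq$'' holds for every $r$, as observed just before the statement. Let $Z$ be smoothable of degree $r'\le r$ and choose a family $Z_t$ of smooth degree $r'$ schemes with flat limit $Z$. The linear spans $\langle\nu_d(Z_t)\rangle$ then have limit containing $\langle\nu_d(Z)\rangle$, and each span lies in $\sigma_{r'}\subseteq\sigma_r$. Since $\sigma_r$ is closed, it contains $\langle\nu_d(Z)\rangle$. To make this rigorous I would use the Hilbert polynomial: $\nu_d(Z_t)$ imposes independent conditions on degree $d$ forms in the limit as well. Hence the spans form a flat family of $(r'-1)$-planes in the Grassmannian, and the limit plane is exactly $\langle\nu_d(Z)\rangle$ once $\nu_d(Z)$ spans a $\bbP^{r'-1}$. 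This in turn follows from $r'\le d+1$: any scheme of degree $r'\le d+1$ imposes independent conditions on degree $d$ hypersurfaces.

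For ``$\subseteq$'', take $[f]\in\sigma_r$. By curve selection, $[f]=\lim_{t\to0}[f_t]$ with $[f_t]\in\langle\nu_d(Z_t)\rangle$ for $Z_t$ smooth of degree $r$, with $t$ in a punctured disc or curve. Properness of $\Hilb_r(\bbP V)$ lets us extend $Z_t$ over $t=0$ to a flat limit $Z_0$, which is smoothable of degree $r$. As above, the planes $\Pi_t=\langle\nu_d(Z_t)\rangle$ form a family in the Grassmannian $G(r-1,\bbP S^dV)$ whose limit $\Pi_0$ contains $\nu_d(Z_0)$. Since $r\le d+1$ forces $\nu_d(Z_0)$ to span a $\bbP^{r-1}$, we get $\Pi_0=\langle\nu_d(Z_0)\rangle$. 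Because $[f_t]\in\Pi_t$, closedness of the incidence correspondence gives $[f]\in\Pi_0$, which completes the argument.

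The key technical step, and the main obstacle, is the claim that a zero-dimensional scheme $Z\subset\bbP V$ of degree $r\le d+1$ imposes independent conditions on $S^dV^*$, i.e.\ $H^0(\mathcal O_{\bbP V}(d))\to H^0(\mathcal O_Z(d))$ is surjective. This makes $\dim\langle\nu_d(Z)\rangle=r-1$ constant in flat families, which is what turns the limit of the spans into the span of the limit. I would prove it by induction on $r$. Pick a subscheme $Z'\subset Z$ of degree $r-1$, which exists by taking a residual with respect to a suitable hyperplane or socle element locally. Independence for $Z'$ in degree $d-1$ holds by induction since $r-1\le d$. Then multiply by a linear form vanishing on $Z'$ but not on the residual point, which produces a form of degree $d$ separating the last condition. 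Equivalently, one shows that the Hilbert function of $Z$ is strictly increasing until it reaches $r$, so it equals $r$ by degree $r-1\le d$. A degree $d$ form with $d\ge r-1$ is enough in every step, and this is exactly where the hypothesis is used.
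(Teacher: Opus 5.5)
The paper does not prove this statement; it only quotes \cite[Proposition 2.5]{BB14}. Your overall strategy is the standard one, and essentially the argument behind that reference. If $\nu_d(Z)$ is linearly independent for every degree-$r$ subscheme $Z$, then $\langle\nu_d(Z)\rangle$ varies in a fixed Grassmannian, and properness of $\Hilb_r(\bbP V)$ turns ``limit of spans'' into ``span of the limit''. Both inclusions follow correctly from this.

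The step that fails as written is your inductive proof of the key lemma. You separate the last condition by multiplying with ``a linear form vanishing on $Z'$ but not on the residual point''. Such a linear form usually does not exist. Take $Z$ to be three distinct points on $\bbP^1$ and $d\geq 2$: every colength-one subscheme $Z'$ is two distinct points, and no nonzero linear form vanishes on both. More generally $Z'$ may span $\bbP V$. Also, for nonreduced $Z$ there is no ``residual point''; what you actually need is a form in $I(Z')_d\setminus I(Z)_d$.

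The route you call ``equivalent'' is the right one, but it needs its justification. Since $I(Z)$ is saturated, a linear form $\ell$ not vanishing at any point of $\operatorname{supp}Z$ is a nonzerodivisor on $S/I(Z)$. Hence $\Delta h_Z(k)=h_Z(k)-h_Z(k-1)$ is the Hilbert function of the standard graded Artinian algebra $S/(I(Z)+(\ell))$. So $\Delta h_Z$ equals $1$ in degree $0$, stays positive up to some degree, and is zero afterwards. Its values sum to $r$, so it vanishes from degree $r$ on, and $h_Z(d)=r$ for all $d\geq r-1$.

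Alternatively, a genuine induction takes a hyperplane $H$ through a point of $\operatorname{supp}Z$ and the residual sequence $0\to\mathcal I_{\operatorname{Res}_HZ}(d-1)\to\mathcal I_Z(d)\to\mathcal I_{Z\cap H,H}(d)\to 0$. It inducts on both $r$ (since $\deg\operatorname{Res}_HZ\leq r-1\leq d$) and $\dim\bbP V$.

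A smaller point concerns the ``$\subseteq$'' direction. For $Z_t$ to vary algebraically with $t$, apply curve selection in the incidence variety of pairs $(Z,[g])$, with $Z$ reduced and $[g]\in\langle\nu_d(Z)\rangle$, inside $\Hilb_r(\bbP V)\times\bbP(S^dV)$, not merely in $\sigma_r^\circ$. You can also avoid curves altogether. Once independence holds, the incidence over the whole smoothable component is a $\bbP^{r-1}$-bundle over a projective irreducible variety, so its image is closed. That image contains $\sigma_r^\circ$. It lies in $\overline{\sigma_r^\circ}$ because the part over reduced schemes is dense. This gives the equality directly.
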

	Spectra of Gorenstein algebras form an open subscheme of $\Hilb_r(\bbP V)$, called the \emph{Gorenstein locus} $\Hilb^G_r (\bbP V)$. The main result of \cite{CJN15} states that for $r \leq 13$, $\Hilb^G_r (\bbP V)$ is irreducible. In other words, Gorenstein schemes of degree at most $13$ are automatically smoothable. {Aside from the aforementioned references, the reader who wishes to learn more about Hilbert schemes of points can consult \cite{Jel22,Jel24}}.
	
	\begin{remark}
		One can show that smoothability of a zero-dimensional subscheme $Z \subset \bbP V$ depends only on the isomorphism type of $Z$, not on the embedding (see, e.g., \cite[Theorem 3.16]{BJ17}). This means that we can without ambiguity define an algebra $A$ to be \emph{smoothable} if some/every embedding $\Spec A \hookrightarrow \bbP V$ is smoothable.
	\end{remark}
	
	\subsection{Centroids of tensors} 
	The action of $X_i\in\End V_i=V_i^*\otimes V_i$ on $V_1\otimes \cdots\otimes V_d$, for any $i=1,\dots,d$, is defined on decomposable tensors by the relation
	\[
	X_i\circ_i (a_1\otimes\dots\otimes a_d)=a_1\otimes\cdots\otimes a_{i-1}\otimes X_i(a_i)\otimes a_{i+1}\otimes\cdots\otimes a_d
	\]
	for all $a_j\in V_j$ and $j=1,\dots,d$. Considering $T$ and $X_i\circ_i T$ as multilinear maps
	\[
	V_1^*\times \cdots\times V_k^*\to V_{k+1}\otimes\cdots\otimes V_d,
	\]
	for any $k=1,\dots,d$,
	it is defined by
	\begin{equation}\label{definition:comp_i}
	(X_i\circ_i T)(\phi_1,\dots,\phi_k)=
	\begin{cases} T\bigl(\phi_1,\dots,\phi_{i-1},\transpose{X}_i(\phi_i),\phi_{i+1},\dots,\phi_k\bigr),\quad &\text{if $1\leq i\leq k$},\\
		X_i\circ_i \bigl(T(\phi_1,\dots,\phi_k)\bigr),\quad &\text{if $k+1\leq i\leq d$,}
	\end{cases}
	\end{equation}
	for all $\phi_j\in V_j^*$ with $j=1,\dots,k$. %
	\begin{definition} \label{def:centroid}
		For any $T\in V_1\otimes\cdots\otimes V_d$, the subspace
		\[
		\cen{T}=\Set{(X_1,\dots, X_d)\in \End(V_1)\times\cdots\times\End(V_d)|X_1\circ_1 T=\dots=X_d\circ_d T}
		\]
		is called the \textit{centroid} of $T$.
	\end{definition}
	For the case of partially symmetric tensors, the definition can be simplified by the use of the following lemma (see \cite[Lemma 2.11]{CFJ25} for a direct proof).
	\begin{lemma}\label{lemma:partiallySymmetricCentroids1}
		Let $e\in\bbN$, let $d_1,\dots,d_e\in\bbN$ such that $d_1 +  \cdots + d_e\geq 3$, let $T\in S^{d_1}V_1\otimes \cdots \otimes
		S^{d_e} V_e$ be a concise tensor, and let $(X_1, \ldots ,X_{d_1 +  \cdots + d_e})\in\cen{T}$. Then \[
		X_{d_1+\cdots+d_{j-1}+1} =X_{d_1+\cdots+d_{j-1}+2}=  \cdots = X_{d_1+\cdots+d_{j}},
		\]
		for every $j=1,\dots,d$.
	\end{lemma}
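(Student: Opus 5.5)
It suffices to treat two factors that belong to the same symmetric block, say positions $p$ and $q$ with $d_1+\dots+d_{j-1}< p<q\le d_1+\dots+d_j$, and to show $X_p=X_q$. For an arbitrary tensor the transposition swapping factors $p$ and $q$ need not fix $T$. Here, however, $T$ is symmetric in the $j$-th block, so that transposition (call it $\sigma$) fixes $T$. It also intertwines the actions: $\sigma(X\circ_p T)=X\circ_q \sigma(T)=X\circ_q T$. Since $d_1+\dots+d_e\ge 3$, I pick a third index $r\notin\{p,q\}$. The centroid condition gives $X_p\circ_p T=X_r\circ_r T=X_q\circ_q T$.

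The key computation combines two consequences of these equalities. First, $Y:=X_p-X_q$ satisfies
\[
Y\circ_p T = X_p\circ_p T - X_q\circ_p T ,\qquad
X_q\circ_p T=\sigma(X_q\circ_q T)=\sigma(X_p\circ_p T)=X_p\circ_q T .
\]
Second, operators acting in distinct slots commute, which lets me shuttle operators between slots. For instance,
\[
X_p\circ_p(X_q\circ_q T)=X_q\circ_q(X_p\circ_p T)=X_q\circ_q(X_q\circ_q T)=X_q^2\circ_q T .
\]
By symmetry, $X_p^2\circ_p T$ equals the same expression. The cleanest route, I expect, is to show that every element of $\cen{T}$ is determined by a single slot, and then use the symmetry under $\sigma$.

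Concretely, I would use the third slot $r$. Applying $\sigma$ to the identity $X_r\circ_r T=X_p\circ_p T$ gives $X_r\circ_r T=X_p\circ_q T$, since $\sigma$ fixes slot $r$ and $T$. Hence $X_p\circ_q T=X_r\circ_r T=X_q\circ_q T$, that is, $(X_p-X_q)\circ_q T=0$. Writing this in terms of the flattening at slot $q$, the equality $Y\circ_q T=0$ means that $Y$ composed with the image of the $q$-th flattening (as a map into $V_j$) vanishes. Conciseness says exactly that the $q$-th flattening $V_q^*\to\bigotimes_{i\neq q}V_i$ is injective. Dually, the contraction map $\bigotimes_{i\ne q}V_i^*\to V_q$ is surjective. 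Therefore the image of $T$ in slot $q$ spans $V_j$, and $Y=0$.

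The main obstacle is being careful with how $\sigma$ interacts with the operators: $X_p$ acting in slot $p$ becomes $X_p$ acting in slot $q$. This is only legitimate because slots $p$ and $q$ carry the same space $V_j$, so that $X_p\in\End V_j$ can act in slot $q$. The third index $r$, guaranteed by $d_1+\dots+d_e\ge3$, is exactly what lets us avoid needing $X_p$ and $X_q$ to commute. Repeating over all pairs in the block gives the claim.
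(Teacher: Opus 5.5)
Your proof is correct. The decisive step is in your ``Concretely'' paragraph. The block transposition $\sigma$ fixes $T$ and fixes slot $r$, so applying it to $X_r\circ_r T=X_p\circ_p T$ gives $X_r\circ_r T=X_p\circ_q T$, hence $(X_p-X_q)\circ_q T=0$. Since $(Y\circ_q T)_q=T_q\circ\transpose{Y}$ and $T_q$ is injective by conciseness, this forces $Y=0$.

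The paper gives no argument for this lemma and only points to \cite[Lemma 2.11]{CFJ25}, so there is no in-text proof to match line by line. Your argument is a self-contained replacement. Its main device is that a transposition inside a symmetric block fixes $T$ and converts $X\circ_p$ into $X\circ_q$, which is exactly what the paper uses in its proof of \Cref{lemma:partiallySymmetricCentroids2}.

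Your version also makes clear where each hypothesis enters. The bound $d_1+\cdots+d_e\ge 3$ is used only to supply the auxiliary slot $r$, and it cannot be dropped. For $T=\sum_i e_i\otimes e_i\in V\otimes V$, the centroid consists of the pairs $(X,\transpose{X})$, with transpose taken in the basis $\{e_i\}$. Conciseness is used only through injectivity of the single flattening $T_q$.

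The intermediate material is never used and can be removed. This covers the identity $X_q\circ_p T=X_p\circ_q T$, the commuting-slot computation with $X_p^2$ and $X_q^2$, and the plan to show that centroid elements are determined by one slot.
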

	For this reason, we will denote elements in $\cen{T}$, for $T\in S^{d_1}V_1\otimes \cdots \otimes
	S^{d_e} V_e$, as tuples $(Y_1,\ldots,Y_e)$ with $Y_j \in \End(V_j)$. The connection with the previous notation is given by $Y_j=X_{d_1+\cdots+d_j}$.
{Note that for $T\in S^{d_1}V_1\otimes \cdots \otimes
	S^{d_e} V_e$ and $Y_j \in \End(V_j)$, the tensors $Y_j \circ_{d_1+\cdots+d_{j-1}+k}T$ need not lie in $S^{d_1}V_1\otimes \cdots \otimes
	S^{d_e} V_e$.}
	In the partially symmetric case, it suffices to verify a subset of the equalities in \Cref{def:centroid}. This is stated more precisely in the following lemma, which is a slight variation of \cite[Lemma 2.11]{CFJ25}.
	\begin{lemma}\label{lemma:partiallySymmetricCentroids2}
		Let $T\in S^{d_1}V_1\otimes\cdots\otimes S^{d_e}V_e$. Then $(Y_1,\ldots,Y_e) \in \cen{T}$ if and only if both of the following conditions hold:
		\begin{enumerate}[label=(\arabic*)]
			\item\label{point 1} $Y_1 \circ_{1} T = Y_2 \circ_{d_1+1} T = \cdots = Y_e \circ_{d_1+\cdots+d_{e-1}+1} T$;
			\item\label{point 2}
			$Y_j \circ_{d_1+\cdots+d_{j-1}+1}T=Y_j \circ_{d_1+\cdots+d_{j-1}+2}T$ for every $j=1,\ldots,e$ such that $d_j > 1$.
		\end{enumerate}
	\end{lemma}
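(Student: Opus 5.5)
The forward direction is immediate: if $(Y_1,\ldots,Y_e)\in\cen{T}$, then by definition all the $X_i\circ_i T$ coincide, where $X_i=Y_j$ whenever $i$ lies in the $j$-th block of indices. Conditions \ref{point 1} and \ref{point 2} are special cases of these equalities. All the work is in the converse.

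For the converse, assume \ref{point 1} and \ref{point 2}. The plan is to show that, for each block $j$ and each position $k$ with $1\leq k\leq d_j$,
\[
Y_j\circ_{d_1+\cdots+d_{j-1}+k}T = Y_j\circ_{d_1+\cdots+d_{j-1}+1}T .
\]
Combined with \ref{point 1}, this makes all the $X_i\circ_i T$ equal, which is membership in $\cen{T}$.

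The key tool is the symmetry of $T$ in each block. Let $\sigma$ be a permutation of $\{1,\ldots,d_1+\cdots+d_e\}$ that preserves each block, and let $P_\sigma$ be the induced operator permuting tensor factors. Symmetry in the blocks means $P_\sigma T=T$. We also have the conjugation identity
\[
P_\sigma\,(X\circ_i T)=X\circ_{\sigma(i)}(P_\sigma T).
\]
To check it, evaluate both sides on decomposable tensors; it holds because $X\in\End(V_j)$ acts on a factor that $\sigma$ moves within the same block $V_j$. Since $P_\sigma T=T$, this gives
\[
P_\sigma\bigl(Y_j\circ_i T\bigr)=Y_j\circ_{\sigma(i)}T .
\]

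Now fix block $j$ with $d_j>1$ and write $a=d_1+\cdots+d_{j-1}$. Condition \ref{point 2} states $Y_j\circ_{a+1}T=Y_j\circ_{a+2}T$. Apply $P_\sigma$ for a block-preserving permutation $\sigma$ that fixes $a+1$ and sends $a+2$ to $a+k$, for any $k\geq 2$. The left side stays $Y_j\circ_{a+1}T$ and the right side becomes $Y_j\circ_{a+k}T$. Hence $Y_j\circ_{a+k}T=Y_j\circ_{a+1}T$ for every $k$, which finishes the proof.

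The one point needing care is the conjugation identity. I expect it to be routine once written on decomposable tensors, since $\circ_i$ acts only on the $i$-th factor. Note that no conciseness assumption is needed, unlike in \Cref{lemma:partiallySymmetricCentroids1}.
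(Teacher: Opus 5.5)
Your proof is correct and is essentially the paper's argument. The paper also uses condition (1) to reduce to showing $Y_j\circ_{a+1}T=Y_j\circ_{a+k}T$ within each block. It then applies the transposition $\tau_k=\bigl((a+2)\,(a+k)\bigr)$ to condition (2), using $\tau_k(T)=T$ and $\tau_k(Y_j\circ_i T)=Y_j\circ_{\tau_k(i)}\tau_k(T)$; this is your conjugation identity for one particular block-preserving $\sigma$.
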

	\begin{proof}
	If $(Y_1,\ldots,Y_e) \in \cen{T}$, then both conditions are satisfied by definition. Conversely, let us assume that both conditions hold. By \ref{point 1}, it suffices to show that for every $j=1,\dots,e$ and every $k=1,\dots,d_j$, we have 
	\[
	Y_{j}\circ_{d_1+\cdots+d_{j-1}+1} T= Y_j\circ_{d_1+\cdots+d_{j-1}+k} T.
	\]
	By \ref{point 2}
 we just need to prove it for $k\neq 1,2$. Considering the transposition 
 \[
 \tau_k\coloneqq\bigl((d_1+\cdots+d_{j-1}+2)(d_1+\cdots+d_{j-1}+k)\bigr)\in\mfS_{d_1+\cdots+d_e},
 \]
 we have
 \begin{align*}
 Y_j\circ_{d_1+\cdots+d_{j-1}+1} T&= Y_j\circ_{d_1+\cdots+d_{j-1}+1} \tau_k(T)=\tau_k(Y_j\circ_{d_1+\cdots+d_{j-1}+1} T)\\
 &=\tau_k(Y_j\circ_{d_1+\cdots+d_{j-1}+2} T)=Y_j\circ_{d_1+\cdots+d_{j-1}+k} \tau_k(T)\\
 &=Y_j\circ_{d_1+\cdots+d_{j-1}+k} T.\qedhere
 \end{align*}
 \end{proof}
	 The following lemma gives us a practical way to compute the centroid in the fully symmetric case, where the tensor $f \in S^d V\cong\bbC[x_1,\dots,x_n]_d$ can be viewed as a polynomial. In what follows we denote the Hessian matrix of $f$ by $\Hess(f)$.
	\begin{lemma}\label{lem:centroidHessian}
 Let $f \in S^d V$ be a symmetric tensor. Then $Y \in \End V$ is in the centroid of $f$ if and only if $Y\cdot\Hess(f)$ is a symmetric matrix.%
	\end{lemma}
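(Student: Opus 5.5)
The plan is to reduce the centroid condition to a single equality between two contractions, using \Cref{lemma:partiallySymmetricCentroids2}, and then to read that equality as a matrix identity involving the Hessian. Throughout I take $d\geq 2$ and view $f\in S^dV\subseteq V^{\otimes d}$. The case $e=1$, $d_1=d$ of \Cref{lemma:partiallySymmetricCentroids2} applies. Condition \ref{point 1} is then vacuous, since it is a chain with only one term. So $Y$, meaning the tuple $(Y,\dots,Y)$, lies in $\cen{f}$ if and only if
\[
Y\circ_1 f = Y\circ_2 f .
\]
(By \Cref{lemma:partiallySymmetricCentroids1}, in the concise case every centroid element has this diagonal form. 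In general I will read the statement as being about such diagonal tuples.) So it remains to translate this single equality into the language of the Hessian.

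To do this, I group the first two tensor factors and regard $f$ as an element of $V\otimes V\otimes S^{d-2}V$. With the basis $\{x_1,\dots,x_n\}$ and dual basis $\{\alpha_i\}$ as in \Cref{rmk:catalecticant}, I write
\[
f=\sum_{i,j} x_i\otimes x_j\otimes M_{ij},\qquad M_{ij}\in S^{d-2}V .
\]
Here $M_{ij}$ is the contraction of $f$ with $\alpha_i$ in slot $1$ and $\alpha_j$ in slot $2$. By the pairing of \Cref{rmk:divided_power_basis} and the identification of flattenings with derivatives, this contraction equals $\partial_i\partial_j f$ up to the nonzero constant $\frac{1}{d(d-1)}$. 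Hence $M=c\,\Hess(f)$ with $c\neq0$, and in particular $M$ is symmetric. Checking this normalization carefully is the only step needing real care, but any nonzero scalar suffices, so the precise constant is irrelevant. The cleanest way is to verify it on powers $\ell^d$ and extend by linearity.

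Next, write $Y(x_i)=\sum_k Y_{ki}x_k$. Then
\[
Y\circ_1 f=\sum_{k,j}x_k\otimes x_j\otimes (YM)_{kj},\qquad
Y\circ_2 f=\sum_{i,k}x_i\otimes x_k\otimes (M\transpose{Y})_{ik}.
\]
Since the $x_k\otimes x_j$ form a basis of $V\otimes V$, the equality $Y\circ_1 f=Y\circ_2 f$ is equivalent to the matrix identity $YM=M\transpose{Y}$ with entries in $S^{d-2}V$. Because $M$ is symmetric, $M\transpose{Y}=\transpose{(YM)}$. So the condition says exactly that $YM$ is symmetric, and since $M=c\,\Hess(f)$ with $c\neq0$, that $Y\cdot\Hess(f)$ is symmetric. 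I expect no real obstacle beyond keeping the transpose convention consistent between $\transpose{X}_i$ in \eqref{definition:comp_i} and the matrix of $Y$; the computation above fixes that convention explicitly.
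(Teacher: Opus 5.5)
Your proof is correct and follows essentially the same route as the paper. Both arguments reduce via \Cref{lemma:partiallySymmetricCentroids2} (with $e=1$) to the single equality $Y\circ_1 f = Y\circ_2 f$. Both then write $f$ in $V\otimes V\otimes S^{d-2}V$ as $\frac{1}{d(d-1)}\sum_{i,j}x_i\otimes x_j\otimes\partial_i\partial_j f$ and read that equality as $Y\cdot\Hess(f)=\Hess(f)\cdot\transpose{Y}$.

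Your remark that, for non-concise $f$, the statement should be read for diagonal tuples $(Y,\dots,Y)$ is a fair clarification that the paper leaves implicit.
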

	\begin{proof}
		The claim is equivalent to $Y \cdot\Hess(f) = \Hess(f)\cdot \transpose{Y}$. Choose a basis $\{x_1, \ldots, x_n\}$ of $V$. Denote by $(a_{ij})_{i,j}$ the matrix of $Y$ in this basis. The Hessian of $f$ is given by $(\partial_i\partial_j f)_{i,j}$. The $(i,j)$-entry of $Y\cdot \Hess(f)$ is 
		\begin{equation*}
			(a_{i1}, \ldots, a_{in}) \cdot \transpose{(\partial_1\partial_j f, \ldots, \partial_n\partial_j f)} = a_{i1} \partial_1\partial_j f + \cdots + a_{in} \partial_n\partial_j f
		\end{equation*}
		and the $(i,j)$-entry of $\Hess(f) \cdot \transpose{Y}$ is 
		\begin{equation*}
			(\partial_i\partial_1 f, \ldots, \partial_i\partial_n f) \cdot \transpose{(a_{1j}, \ldots, a_{nj})} = a_{1j} \partial_i\partial_1 f + \cdots + a_{nj} \partial_i\partial_n f.
		\end{equation*}
		By \Cref{lemma:partiallySymmetricCentroids2}, $Y \in \cen{f}$ if and only if the following compositions are equal:
		\begin{equation}\label{r:equAidid}
			S^d V \xrightarrow{\phi} V \otimes V \otimes S^{d-2} V \xrightarrow{Y \otimes \id \otimes \id} V \otimes V \otimes S^{d-2} V,
		\end{equation}
		\begin{equation}\label{r:equidAid}
			S^d V \xrightarrow{\phi} V \otimes V \otimes S^{d-2} V \xrightarrow{\id \otimes Y \otimes \id} V \otimes V \otimes S^{d-2} V, 
		\end{equation}
		where the inclusion map $\phi \colon S^d V \xrightarrow{} V \otimes V \otimes S^{d-2} V$ is defined by 
		\begin{equation*}
			f \mapsto \frac{1}{d(d-1)} \sum_{i,j=1}^n x_i \otimes x_j \otimes \partial_i \partial_j f.
		\end{equation*}
		The image of $f$ under the map in (\ref{r:equAidid}) is 
		\begin{equation*}
			\frac{1}{d(d-1)} \sum_{i,j,\ell=1}^n a_{\ell i} x_\ell \otimes x_j \otimes \partial_i \partial_j f.
		\end{equation*}
		Note that $\sum_{i} a_{\ell i} \partial_i \partial_j f$ is the $(\ell,j)$-entry of $Y\cdot \Hess(f)$. On the other hand, the image of $f$ under the map in (\ref{r:equidAid}) is 
		\begin{equation*}
			\frac{1}{d(d-1)} \sum_{i,j,\ell=1}^n x_i \otimes a_{\ell j} x_\ell \otimes \partial_i \partial_j f, 
		\end{equation*}
		which for all $i,\ell=1,\dots,n$ gives the $(\ell,i)$-entry of $\Hess(f) \cdot \transpose{Y}$, that is, $\sum_{j} a_{\ell j} \partial_i \partial_j f$. The assertion follows by reindexing the sums. 
	\end{proof}

In general, the centroid $\cen{T}$ is a unital subalgebra of $\End(V_1)\times\cdots\times\End(V_d)$. If $T$ is concise and $d \geq 3$, it is commutative. 
The proof of this fact is given in \cite[Theorem 1.11]{JLP24} for the case where $d=3$ and $\dim V_1=\dim V_2=\dim V_3$, that is, tensors in $(\bbC^m)^{\otimes 3}$ for any $m\in\bbN$. This justifies, in particular, the choice of the name \textit{$111$-algebras}. The same proof can be easily extended to the general case (see also \cite{JelBedlewo}). %

\subsection{1-generic tensors}
In \cite{JLP24}, multiplication tensors %
of algebras are characterized in terms of \emph{1-genericity}. Following \cite[Proposition 4.13]{JJ26}, one can generalize this notion to higher order tensors.
\begin{definition}
	Let $T\in V_1\otimes\cdots\otimes V_d$ with $\dim V_i=m$ for every $i=1,\dots,d$. Then $T$ is \textit{$1_{V_i}$-generic} if there exists an element $\eta \in V_i^*$ such that $T_i(\eta)\in V_1\otimes\cdots\otimes V_{i-1}\otimes V_{i+1}\otimes \cdots\otimes V_d$ is concise. If $T$ is $1_{V_i}$-generic for every $i=1,\dots,d$, then $T$ is \textit{$1$-generic}.
\end{definition}
Note that a symmetric tensor is $1_{V_i}$-generic for some $i$ if and only if it is $1$-generic. In the language of polynomials, we can verify $1$-genericity using the following proposition:
\begin{proposition}\label{proposition_HMV20}
	Let $f \in S^dV$ be a concise symmetric tensor. Then the following are equivalent:
	\begin{enumerate}[label=(\roman*)]
		\item \label{it:HMV1} the Hessian matrix $\Hess(f)$ has nonzero determinant;
		\item \label{it:HMV2} there exists $\eta\in V^*$ such that the matrix $\Cat_f^{\smash{(d-2)}}(\eta^{d-2}) \in S^2V$ is full-rank. %
		\item \label{it:HMV3} $f$ is $1$-generic.
	\end{enumerate}
\end{proposition}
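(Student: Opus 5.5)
The plan is to show (i)$\Leftrightarrow$(ii) using the multilinear structure of the Hessian, and (ii)$\Leftrightarrow$(iii) by unwinding the definition of $1$-genericity for symmetric tensors. We may assume $d\ge 3$; for $d=2$ the Hessian is constant and all three conditions say that the quadric $f$ is nondegenerate, which is conciseness.

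\emph{Step 1: (ii)$\Leftrightarrow$(iii).} For $\eta\in V^*$, the contraction $T_1(\eta)\in S^{d-1}V$ is, up to a nonzero scalar, the directional derivative $\partial_\eta f$. By definition, $f$ is $1_{V_1}$-generic if and only if some $\partial_\eta f$ is a concise tensor in $S^{d-1}V$. By \Cref{rmk:catalecticant}, this means $\dim\langle \partial_i\partial_\eta f\rangle_i = n$, i.e.\ the Hessian-type matrix $\bigl(\partial_i\partial_j\partial_\eta^{\,d-3}(\partial_\eta f)\bigr)$ is nonsingular.

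Concretely, conciseness of $g=\partial_\eta f$ means its flattening $V^*\to S^{d-2}V$ is injective. This is not literally the same as $\Cat_f^{(d-2)}(\eta^{d-2})$ being full rank. I therefore route the equivalence through (i), giving a cycle: (ii)$\Rightarrow$(iii)$\Rightarrow$(i)$\Rightarrow$(ii).

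\emph{Step 2: (ii)$\Rightarrow$(iii).} Identify $\Cat_f^{(d-2)}(\eta^{d-2})$ with the quadric $q=\partial_\eta^{d-2}f\in S^2V$, or equivalently the symmetric matrix $\bigl(\partial_\eta^{d-3}\partial_i\partial_j(\partial_\eta f)\bigr)$. If $\partial_\eta f$ were not concise, some $\alpha\ne 0$ would satisfy $\partial_\alpha\partial_\eta f=0$. Then $\partial_\alpha q=\partial_\eta^{d-3}\partial_\alpha\partial_\eta f=0$, so $q$ is degenerate. This proves the implication.

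\emph{Step 3: (iii)$\Rightarrow$(i).} The Hessian $H(x)=(\partial_i\partial_jf)$ is a matrix of forms of degree $d-2$. Suppose $\det H\equiv 0$. By (iii), pick $\eta$ with $g=\partial_\eta f$ concise. Conciseness of $g$ says the linear map $\alpha\mapsto\partial_\alpha\partial_\eta f$ is injective, which is a statement about the contracted tensor rather than about pointwise evaluations of $H$. It is not immediately incompatible with $\det H\equiv0$, and I expect this to be the main obstacle. My first attempt: since $f$ is symmetric, the flattening $V^*\to V\otimes S^{d-2}V$ of $f$, restricted via $\eta$, is a pencil of symmetric matrices $H(x)$. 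I would argue that injectivity of $\alpha\mapsto\partial_\alpha\partial_\eta f$ for generic $\eta$, combined with symmetry, forces some evaluation $H(v)$ to be invertible. Here I would invoke the known result behind the citation (a symmetric $1$-generic tensor has a full-rank slice in its own symmetric slot, as in Proposition~4.13 of JJ26 and HMV20). If this fails, I would instead reinterpret $1$-genericity as the existence of $\eta$ with $f(\eta,\cdot,\cdot,\ldots)$ concise as a tensor in $V^{\otimes(d-1)}$, and choose the slicing so that concise means a full-rank matrix after contracting with $\eta^{d-2}$. This identifies (iii) with (ii) directly and makes Step 3 unnecessary.

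\emph{Step 4: (i)$\Rightarrow$(ii).} Evaluating $H$ at a point $v\in V^*$ (identifying $\eta\leftrightarrow v$) gives, by Euler's formula up to the factor $(d-2)!$, exactly $\Cat_f^{(d-2)}(\eta^{d-2})$ as a matrix: $\partial_i\partial_j f(v)$ is the pairing of $\eta^{d-2}$ with $\partial_i\partial_j f$. Since $\det H$ is a nonzero polynomial, it is nonvanishing at some $v$, which gives (ii). Conversely (ii)$\Rightarrow$(i) follows from the same identity, so (i)$\Leftrightarrow$(ii) is essentially formal.
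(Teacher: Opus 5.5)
\textbf{Steps 2 and 4 are fine.} Your Step 4 is correct and gives a self-contained proof of (i)$\Leftrightarrow$(ii), where the paper only cites \cite[Lemma 4.7]{HMV20}: the matrix of $\partial_\eta^{d-2}f$ is $(d-2)!\,\Hess(f)$ evaluated at $\eta$. Your Step 2 is the paper's argument for (ii)$\Rightarrow$(iii).

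\textbf{The gap is Step 3.} The implication (iii)$\Rightarrow$(i) is never proved. You defer to an unnamed ``known result''. Your fallback simply redefines $1$-genericity as condition (ii). Conciseness of $\partial_\eta f\in S^{d-1}V$ means that $\alpha\mapsto\partial_\alpha\partial_\eta f\in S^{d-2}V$ is injective, and contracting further with $\eta^{d-3}$ can destroy this injectivity. The ``i.e.'' sentence in your Step 1 makes the same conflation.

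\textbf{This gap cannot be closed for $d\geq 4$, because the implication is false there.} Take $f=x_1u^{d-1}+x_2u^{d-2}v+x_3u^{d-3}v^2$ in the five variables $x_1,x_2,x_3,u,v$.
\begin{itemize}
\item $f$ is concise. Its first partials are $u^{d-1},u^{d-2}v,u^{d-3}v^2,\partial_uf,\partial_vf$. Only $\partial_uf$ involves $x_1$, and $\partial_vf=x_2u^{d-2}+2x_3u^{d-3}v$ involves $x_2$, so these five are independent.
\item $\det\Hess(f)\equiv 0$. Since $f$ is linear in the $x_i$, $\Hess(f)$ has a $3\times 3$ zero block inside a $5\times5$ matrix.
\item $f$ is $1$-generic in the paper's sense. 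Let $g=\partial_uf=(d-1)x_1u^{d-2}+(d-2)x_2u^{d-3}v+(d-3)x_3u^{d-4}v^2$. Among the five partials of $g$, only $\partial_ug$ contains $x_1$. Among the rest, only $\partial_vg$ contains $x_2$. The $\partial_{x_i}g$ are nonzero multiples of the distinct monomials $u^{d-2},u^{d-3}v,u^{d-4}v^2$. Hence $g$ is concise.
\end{itemize}

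\textbf{The paper's own proof of (iii)$\Rightarrow$(i) breaks at the same spot.} It deduces $\det\Hess(f)\neq 0$ from $\bbC$-linear injectivity of $b\mapsto b\cdot\Hess(f)$. But $b\cdot\Hess(f)=\nabla(\partial_bf)$, so this injectivity holds for every concise $f$. By contrast, $\det\Hess(f)\neq0$ requires the rows to be independent over $\bbC(x_1,\dots,x_n)$. The cubic $x_1u^2+x_2uv+x_3v^2$ already shows the difference.

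\textbf{What is actually true.} Your suspicion was justified: the valid content is (i)$\Leftrightarrow$(ii)$\Rightarrow$(iii). The converse holds for $d=3$, where $\partial_\eta f$ is itself the quadric with matrix $\Hess(f)(\eta)$, but fails in general for $d\geq4$.

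For the later use in \Cref{proposition:equivalence_Gorenstein_algebra}, the bad direction can be bypassed. Contracting $\mu^{(d)}_{A,\epsilon}$ with $1_A^{d-2}$ gives the form $(a,b)\mapsto\epsilon(a\bul b)$. This form is nondegenerate because $\epsilon$ is a dual generator, so (ii) holds directly.
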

\begin{proof}
	The equivalence \ref{it:HMV1} $\Leftrightarrow$ \ref{it:HMV2} is {\cite[Lemma 4.7]{HMV20}}. We now show \ref{it:HMV2} $\Rightarrow$ \ref{it:HMV3}. %
	Suppose for contradiction that $f$ is not $1$-generic. Then for every $\eta \in V^*$, there exist a strict subspace $U \subsetneq V$ such that $\Cat_f^{\smash{(1)}}(\eta^{d-2}) \in S^{d-1}U$. Then also $\Cat_f^{\smash{(d-2)}}(\eta^{d-2}) \in S^2U$, which implies $\smash{\rk\left(\Cat_f^{\smash{(d-2)}}(\eta^{d-2})\right)}<\dim V$. Since $\eta \in V^*$ was chosen arbitrarily, we reach a contradiction with \ref{it:HMV2}. 
	To prove \ref{it:HMV3} $\Rightarrow$ \ref{it:HMV1}, write $n=\dim V$ and note that a polynomial is 1-generic if and only if there exist coefficients $a_1,\dots,a_n\in\bbC$ such that $\sum_j a_j\partial_jf$ is concise. By \Cref{rmk:catalecticant}, this implies that $\sum_{i,j} b_ia_j\partial_i\partial_jf\neq 0$ for all $b_1,\dots,b_n\in\bbC$. It follows that the map
	\[
	H\colon \bbC^n\to S^{d-2}V,\qquad (b_1,\dots,b_n)\mapsto (b_1,\dots,b_n)\cdot\Hess(f),
	\]
	is injective, so that $\det\bigl(\Hess(f)\bigr)\neq 0$.
\end{proof}

In \cite{JJ26} it is shown that 1-generic tensors correspond to iterated multiplication tensors of Gorenstein algebras. In \Cref{sec:classification}, we will use this to give a practical way of checking, under certain constraints, whether or not a given symmetric tensor has minimal border rank. 

\section{Evaluation tensors of Gorenstein algebras}

\subsection{Iterated multiplication tensors and evaluation tensors}
In this section we will consider elements of the symmetric product $S^dA$. To avoid ambiguity, we denote the product of two elements $a$ and $b$ of an algebra $A$ by $a\bul b \in A$, reserving the notation $ab$ for the symmetric product $ab \in S^2A$. 
	For any algebra $A$, 
	the \textit{$d$-th iterated multiplication tensor} is $\mu_{A}^{\smash{(d)}}\in(A^*)^{\otimes d}\otimes A$ defined by the multilinear map
	\[
		\mu_{A}^{(d)}\colon A\times\cdots\times A\to A,\qquad
		(a_1,\dots,a_d) \mapsto a_1\bul \cdots \bul a_d.
	\]
	Since $A$ is commutative, we have in fact $\mu_{A}^{\smash{(d)}} \in S^d{A^*} \otimes A$, i.e.\ the tensor is partially symmetric. 
	Given an element $\epsilon\in A^*$, %
	the composition $\epsilon\circ \mu_{A}^{\smash{(d)}}\colon A^{d} \to \bbC$ corresponds to a symmetric tensor, called \emph{evaluation tensor} and denoted by $\smash{\mu_{A,\epsilon}^{\smash{(d)}}}\in S^{d}A^*$. 

	If, as in \Cref{rmk:catalecticant}, we choose a basis $\{b_1,\dots,b_n\}$ of $A$ and identify $S^dA^*$ with $\bbC[x_1,\ldots,x_n]_d$, %
	then $\mu_A^{\smash{(d)}}\in S^dA^*\otimes A$ is the partially symmetric tensor
		\begin{align}
		\nonumber\mu_A^{\smash{(d)}}&=\sum_{i_1,\dots,i_d=1}^nx_{i_1}\otimes\cdots\otimes x_{i_d}\otimes (b_{i_1}\bul\cdots\bul b_{i_d})\\
		&=\sum_{\substack{0\leq j_1,\dots,j_n\leq d\\j_1+\cdots+j_n=d}}\binom{d}{j_1,\dots,j_n}x_1^{j_1}\cdots x_n^{j_n}\otimes (b_{1}^{\bul j_1}\bul\cdots\bul b_{n}^{\bul j_n}). \label{eq:mult_tensor_explicit}
	\end{align}
	and $\mu^{(d)}_{A,\epsilon}$ is the polynomial
\begin{equation}\label{eq:mult_tensor_as_polynomial}
\smash{\mu_{A,\epsilon}^{\smash{(d)}}}=\sum_{\substack{0\leq j_1,\dots,j_n\leq d\\j_1+\cdots+j_n=d}}\binom{d}{j_1,\dots,j_n}\epsilon(b_{1}^{\bul j_1}\bul\cdots\bul b_{n}^{\bul j_n})x_{1}^{j_1}\cdots x_{n}^{j_n}.
\end{equation}
After rescaling, this polynomial can be more succinctly written in the divided power basis: %
\begin{equation}\label{eq:mult_tensor_as_polynomial_div_power}
	d!\smash{\mu_{A,\epsilon}^{\smash{(d)}}}=\sum_{\substack{0\leq j_1,\dots,j_n\leq d\\j_1+\cdots+j_n=d}}\epsilon(b_{1}^{\bul j_1}\bul\cdots\bul b_{n}^{\bul j_n})x_{1}^{[j_1]}\cdots x_{n}^{[j_n]}. %
\end{equation}
\begin{remark}
The $k$-th catalecticant map of $\smash{\mu_{A,\epsilon}^{\smash{(d)}}}$ is the linear map
\begin{equation}\label{formula:flattenings_multiplication_maps}
T_{\mu_{A,\epsilon}^{(d)}}^{(k)}\colon S^kA\to S^{d-k}A^*,\qquad a_1 \cdots a_{k}\mapsto \mu_{A,(a_1\buls a_k)\cdot\epsilon}^{\smash{(d-k)}}
\end{equation}
In particular, from the point of view of polynomials, once we fix a basis, the partial derivative
\[
\partial_1^{j_1} \cdots \partial_n^{j_n} \mu_{A,\epsilon}^{\smash{(d)}}
\]
is the polynomial associated to the multiplication tensor
\[
{\mu_{A,(b_1^{\bul j_1}\buls b_n^{\bul j_n})\cdot\epsilon}^{\smash{(d-k)}}}
\]
for all $j_1,\dots,j_n\in\bbN$ such that $j_1+\cdots+j_n=k$.
\end{remark}

\begin{lemma}\label{lem:GorensteinTensor}%
	Let $A$ be a finite-dimensional $\bbC$-algebra and let $\epsilon \in A^*$. The following are equivalent:
	\begin{enumerate}[label=(\roman*)]
		\item \label{it1} $A$ is Gorenstein with dual generator $\varepsilon$;
		\item \label{it2} $\smash{\mu_{A,\epsilon}^{\smash{(d)}}}$ is concise for some $d \geq 2$;
		\item \label{it3} $\smash{\mu_{A,\epsilon}^{\smash{(d)}}}$ is concise for every $d\geq 2$.
	\end{enumerate}
	If these conditions hold, then $\smash{\mu_{A,\epsilon}^{\smash{(d)}}}$ is %
	{equivalent} to $\smash\mu_{A}^{\smash{(d-1)}}$. 
\end{lemma}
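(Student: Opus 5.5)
The plan is to reduce everything to a statement about the pairing $(a,b)\mapsto \epsilon(a\bul b)$ on $A$, using the description \eqref{formula:flattenings_multiplication_maps} of the flattening. A symmetric tensor is concise exactly when its flattening $A\to S^{d-1}A^*$ is injective. By \eqref{formula:flattenings_multiplication_maps} this flattening is $a\mapsto \mu^{(d-1)}_{A,a\cdot\epsilon}$, and $\mu^{(d-1)}_{A,\varphi}$ is simply $\varphi\circ\mu^{(d-1)}_A$ restricted to symmetric arguments. So the key step is:
\[
\mu^{(d-1)}_{A,\varphi}=0 \iff \varphi=0 .
\]
This holds because $A$ is unital: evaluating at $(a,1,\dots,1)$ gives $\varphi(a)$, and conversely $\varphi=0$ kills every product. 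Applying it with $\varphi=a\cdot\epsilon$ shows that the kernel of the flattening is $\{a\in A \mid a\cdot\epsilon=0\}$, independently of $d\geq 2$. This immediately gives the equivalence of \ref{it2} and \ref{it3}.

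Next I would show that injectivity of $a\mapsto a\cdot\epsilon$ is equivalent to \ref{it1}. The map $A\to\omega_A$, $a\mapsto a\cdot\epsilon$, is $A$-linear between spaces of equal finite dimension. Hence it is injective iff it is surjective, i.e.\ iff $A\cdot\epsilon=\omega_A$. By definition, this says precisely that $\omega_A$ is cyclic with generator $\epsilon$, that is, $A$ is Gorenstein with dual generator $\epsilon$. This proves the equivalence of \ref{it1} with \ref{it2} and \ref{it3}.

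For the final claim, I would use the isomorphism $\psi\colon A\to A^*$, $a\mapsto a\cdot\epsilon$, and consider
\[
(\id^{\otimes(d-1)}\otimes\psi)\bigl(\mu^{(d-1)}_A\bigr)\in (A^*)^{\otimes(d-1)}\otimes A^* .
\]
As a multilinear form it sends $(a_1,\dots,a_d)\mapsto (a_1\bul\cdots\bul a_{d-1})\cdot\epsilon\,(a_d)=\epsilon(a_1\bul\cdots\bul a_d)$, which is exactly $\mu^{(d)}_{A,\epsilon}$. Since $\psi$ is invertible, each tensor is a restriction of the other, so the two are equivalent.

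I expect no real obstacle here. The only point needing care is keeping the identifications consistent between $S^dA^*$ viewed as symmetric multilinear forms and as a subspace of $(A^*)^{\otimes d}$, and checking that the flattening formula \eqref{formula:flattenings_multiplication_maps} agrees with the normalization used for the dual pairing. This affects only nonzero scalars and hence does not change kernels.
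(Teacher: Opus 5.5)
Your proof is correct and is essentially the paper's argument: the paper shows the dual flattening $S^{d-1}A\to A^*$, $a_1\cdots a_{d-1}\mapsto (a_1\bul\cdots\bul a_{d-1})\cdot\epsilon$, has image $A\cdot\epsilon$ (using $1_A^{d-2}b$ for surjectivity), which is the transpose of your computation that the flattening has kernel $\{a \mid a\cdot\epsilon=0\}$; the final equivalence via $\id^{\otimes(d-1)}\otimes\theta_\epsilon$ is the same as yours. One small gain of your formulation is that it gives (ii)$\Leftrightarrow$(iii) directly, because the kernel does not depend on $d$, instead of going (ii)$\Rightarrow$(i)$\Rightarrow$(iii).
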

\begin{proof}
	Implication $\ref{it3}\Rightarrow\ref{it2}$ is trivial. Let us prove implication $\ref{it2}\Rightarrow\ref{it1}$. The dual flattening of $\mu^{\smash{(d)}}_{A,\epsilon}$ is given by
	\begin{align} \label{eq:dualFlattening}
		\psi\colon S^{d-1}A &\to A^*,\qquad a_1 \cdots a_{d-1} \mapsto (a_1\buls a_{d-1}) \cdot \epsilon.
	\end{align}
	Clearly the image of this map is contained in $A \cdot \epsilon$. But the conciseness assumption means it is surjective. So $A \cdot \epsilon = \omega_A$ as desired.
	Now we show $\ref{it1}\Rightarrow\ref{it3}$: we know that $\epsilon$ is a dual generator and need to show that \eqref{eq:dualFlattening} is surjective. Take any element of $A^*$, by assumption it can be written as $b \cdot \epsilon$ for some $b \in A$. But then 
	\[
	\psi( 1_A^{ d-2} b)=(1_A\buls 1_{A}\bul b) \cdot \epsilon=b \cdot \epsilon,
	\]
	so $\psi$ is indeed surjective.
	Now let us assume that conditions \ref{it1}--\ref{it3} hold. Then the linear map 
	\[
	\theta_\epsilon\colon A \to A^*,\qquad a \mapsto a \cdot \epsilon,
	\]
	is an isomorphism. Consider the commutative diagram
	\[
	\begin{tikzcd}
		A^{d-1} & A \\ A^{d-1} & A^*
		\arrow["\mu_A^{(d)}", from=1-1, to=1-2]
		\arrow["\id_A^{\times(d-1)}", from=1-1, to=2-1]
		\arrow["\theta_\epsilon", from=1-2, to=2-2]
		\arrow[from=2-1,to=2-2]
	\end{tikzcd}
	\]
	Since the vertical arrows are isomorphisms, the map $\id_{A^*}^{\otimes d-1}\otimes\theta_{\epsilon}$ gives an equivalence between the tensors corresponding to the top and bottom multilinear maps. But these tensors are exactly $\smash\mu_{A}^{\smash{(d-1)}}$ and $\smash\mu_{A,\epsilon}^{\smash{(d)}}$.
\end{proof}
	
This lemma implies in particular that for a given Gorenstein algebra $A$, 
 different choices of a dual generator $\epsilon$ yield equivalent tensors (all equivalent to the partially symmetric tensor $\mu^{\smash{(d-1)}}_A$). So we can think of the evaluation tensor  $\mu^{\smash{(d)}}_{A,\epsilon}$ as \emph{the} symmetric tensor corresponding to the Gorenstein algebra $A$.
 
 \begin{example}\label{exam:mult_x_4}
 	Let $A = \bbC[t] / (t^4)$. This is a Gorenstein algebra of degree $4$, with the socle generated by $t^3$. Let $\{b_1, b_2, b_3, b_4\}$ be the basis of $A$ over $\bbC$, corresponding to $\{1, t, t^2, t^3\}$. 
 	The only nonzero $d$-fold products of these basis vectors are
 	\[
 	b_1^{d} = b_1, \quad b_1^{d-1}b_2 = b_2, \quad b_1^{d-1}b_3=b_1^{d-2}b_2^2=b_3, \quad b_1^{d-1}b_4=b_1^{d-2}b_2b_3=b_1^{d-3}b_2^3=b_4.
 	\]
 	Using \eqref{eq:mult_tensor_explicit}, the iterated multiplication tensor can be written as
 	\begin{align*}
 		\mu_{A}^{\smash{(d)}} &=x_1^{d} \otimes b_1 + dx_1^{d-1}x_2 \otimes b_2 +  dx_1^{d-1}x_3 \otimes b_3 +  \binom{d}{2}x_1^{d-2}x_2^2 \otimes b_3 + dx_1^{d-1}x_4 \otimes b_4\\
 		&\hphantom{{}={}} 
 		+ 2\binom{d-2}{2} x_1^{d-2}x_2x_3 \otimes b_4 + \binom{d}{3}x_1^{d-3}x_2^3 \otimes b_4.
 	\end{align*}
 	As dual generator $\epsilon$ we choose the basis vector $x_4$ dual to $b_4$. We find
 	\begin{equation} \label{eq:example_polynomial}
 		\mu_{A,\epsilon}^{(d)} = dx_1^{d-1}x_4
 		+ 2\binom{d-2}{2} x_1^{d-2}x_2x_3 + \binom{d}{3}x_1^{d-3}x_2^3.
 	\end{equation}
 	Note that %
 	\begin{equation*}%
 		\theta_\epsilon(b_1) = x_4, \quad \theta_\epsilon(b_2)= x_3, \quad \theta_\epsilon(b_3)=x_2, \quad \theta_\epsilon(b_4)=x_1. 
 	\end{equation*} 
 	If we apply the map ${\id_{A^*}^{\otimes d-1}} \otimes \theta_\epsilon$ to the iterated multiplication tensor $\mu_{A}^{\smash{(d-1)}}$, we recover the same polynomial as in \eqref{eq:example_polynomial}. This explicit computation is left to the reader.
 	After rescaling by a factor $d!$ and passing to the divided power basis, the polynomial \eqref{eq:example_polynomial} becomes
 	\begin{equation} \label{eq:example_polynomial_div_power}
 		x_1^{[d-1]}x_4
 		+ x_1^{[d-2]}x_2x_3 + x_1^{[d-3]}x_2^{[3]}.
 	\end{equation}
 \end{example}
 
\subsection{The cactus apolarity lemma} The following result is a symmetric version of the cactus apolarity lemma \cite[Theorem 1.3]{JJ26} (see also \cite[Lemma 5.5]{DM26}). It states that symmetric tensors of minimal \emph{cactus} rank are precisely the multiplication tensors of Gorenstein algebras.

\begin{theorem}\label{thm:multTensorsAndLinearSpanOfSpec}
	Let $f\in S^dV$ be a concise tensor and let $A$ be a $\bbC$-algebra such that $\dim V = \dim A := n <\infty$. Then the following are equivalent:
	\begin{enumerate}
		\item There is an embedding $\iota\colon\Spec A \hookrightarrow \bbP V$ such that $f \in \langle \nu_d(\im \iota) \rangle$;
		\item \label{it:gormult} $A$ is Gorenstein, and $f$ is equivalent to $\mu^{(d-1)}_A$.
	\end{enumerate}
\end{theorem}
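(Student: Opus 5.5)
The plan is to translate the condition $f \in \langle \nu_d(\im\iota)\rangle$ into linear algebra on $A$. An embedding $\iota\colon \Spec A \hookrightarrow \bbP V$ is the same as a choice of linear map $\phi\colon V^* \to A$ whose image generates $A$ as an algebra, up to the usual twist by a unit. More precisely, the homogeneous coordinate ring of $\bbP V$ is $\bbC[V^*]=\bigoplus_k S^kV^*$. A closed embedding of the zero-dimensional scheme $\Spec A$ (which avoids some hyperplane) is given by a graded surjection $S^\bullet V^* \to A[t]$, equivalently by maps $\phi_k\colon S^kV^*\to A$, $\alpha_1\cdots\alpha_k \mapsto \phi(\alpha_1)\bul\cdots\bul\phi(\alpha_k)$. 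These maps are surjective for $k\gg0$, and in fact for $k\ge n-1$. The degree $d$ part of the ideal $I(\iota(\Spec A))$ is $\ker\phi_d$. Hence the span $\langle\nu_d(\im\iota)\rangle$, viewed inside $S^dV=(S^dV^*)^*$, is the annihilator of $\ker\phi_d$, namely the image of $\phi_d^*\colon A^*\to S^dV$.

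Thus condition (1) says that $f=\epsilon\circ\phi_d$ for some $\epsilon\in A^*$. Equivalently, $f=(\transpose{\phi})^{\otimes d}\bigl(\mu^{(d)}_{A,\epsilon}\bigr)$, where $\transpose\phi\colon A^*\to V$ is the transpose of $\phi$. So (1) amounts to: $f$ is a restriction of an evaluation tensor $\mu^{(d)}_{A,\epsilon}$ via the single linear map $\transpose\phi$.

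For (1)$\Rightarrow$(2) I would argue by dimensions. Since $f$ is concise and $f$ lies in $S^d(\im\transpose\phi)$, the map $\transpose\phi$ is surjective. As $\dim A^*=\dim V=n$, it is an isomorphism. Therefore $\mu^{(d)}_{A,\epsilon}$ is concise, and in particular $\phi$ is an isomorphism $V^*\cong A$. By \Cref{lem:GorensteinTensor}, $A$ is Gorenstein with dual generator $\epsilon$, and $\mu^{(d)}_{A,\epsilon}\sim\mu^{(d-1)}_A$. Composing the two equivalences gives $f\sim\mu^{(d-1)}_A$.

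For (2)$\Rightarrow$(1), pick a dual generator $\epsilon$. By \Cref{lem:GorensteinTensor}, $\mu^{(d)}_{A,\epsilon}\sim\mu^{(d-1)}_A\sim f$, so $f\sim \mu^{(d)}_{A,\epsilon}$, and both tensors are concise symmetric. By the remark following the definition of equivalence, there is an isomorphism $F\colon A^*\to V$ with $f=F^{\otimes d}(\mu^{(d)}_{A,\epsilon})$. Set $\phi=\transpose F\colon V^*\xrightarrow{\sim}A$. Since $\phi$ is surjective onto $A$, it defines a closed embedding $\Spec A\hookrightarrow\bbP V$; concretely this is $\Spec A\to\mathbb{A}^n\subset \bbP(V\oplus\bbC)$, and we must adjust for $\bbP V$. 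By the computation above, $f$ then lies in $\langle\nu_d(\im\iota)\rangle$.

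The main obstacle is making the first step rigorous: correctly identifying embeddings into $\bbP V$ (rather than affine space) with linear maps $V^*\to A$, including the unit twist and the homogenization. The clean way is the following. Any zero-dimensional subscheme $Z\subset\bbP V$ misses some hyperplane, so $\mathcal{O}(1)|_Z$ is trivial. Choosing a trivialization gives the restriction map $V^*=H^0(\mathcal{O}(1))\to H^0(\mathcal{O}_Z)=A$, and then $S^dV^*\to A$ is just multiplication of the images. One checks that this is surjective onto $A$ whenever $Z$ spans, or at least after saturation, and that $I(Z)_d=\ker$. In (2)$\Rightarrow$(1) the image of $\phi$ is all of $A$ and contains the unit $1_A$, so the morphism $\Spec A\to\bbP V$ given by the generators $\phi(\alpha_i)$ (with some $\phi(\alpha)=1_A$ nowhere vanishing) is a closed embedding. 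I would verify this by checking that the corresponding ring map from the affine chart $\{\alpha\neq0\}$ is surjective.
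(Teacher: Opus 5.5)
Your proposal is correct and is essentially the paper's proof: the paper likewise picks $\ell\in V^*$ with $Z\subset D_+(\ell)$, uses $g\mapsto g/\ell^j$ to get an algebra map $\varphi\colon S^\bullet V^*\to A$ with $\ker\varphi_d=I(Z)_d$, and deduces from conciseness that $\varphi_1$ is an isomorphism. It then concludes with \Cref{lem:GorensteinTensor}, and for the converse embeds $\Spec A$ into the affine chart of the linear form sent to $1_A$, exactly as in your last paragraph. The only differences are cosmetic: phrasing membership in the span as $f\in\im\phi_d^*$ lets you skip the paper's check that $S^dV^*/I(Z)_d\to A$ is bijective, and your passing remark about $\Spec A\to\mathbb{A}^n\subset\bbP(V\oplus\bbC)$ is a red herring that should simply be deleted, since the chart construction already gives the embedding into $\bbP V$.
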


\begin{proof}
	Suppose there is an embedding 
	$\iota\colon\Spec A \hookrightarrow \bbP V$ such that 
	$f \in \langle \nu_d (Z) \rangle $, where $Z = \im \iota$ is the projective subscheme corresponding to $\Spec A$. 
	We will prove \eqref{it:gormult}. 
	We view $f \in \langle \nu_d (Z) \rangle$ as a linear map $\bbC \rightarrow \langle \nu_d (Z) \rangle$ that sends $1 \mapsto f$. 
	Then we can write 
	\begin{equation*}
		\bbC \xrightarrow{f} \langle \nu_d (Z) \rangle \hookrightarrow S^d V.
	\end{equation*}
	Taking duals, we get
	\begin{equation*}
		S^d V^* \twoheadrightarrow S^d V^* / I(Z)_d \xrightarrow{f^*} \bbC, 
	\end{equation*}
	where $I(Z)$ is the homogeneous ideal of $Z$. 
	Since $\Spec A$ is a zero-dimensional scheme, its image $Z$ is supported on finitely many points of $\bbP V$. 
	In particular, there is $\ell \in V^*$ such that $Z$ lies in the affine open $D_+(\ell)$ defined by $\ell \neq 0$:
	\begin{equation*}
		D_+(\ell) = \Spec S_{(\ell)} \subseteq \bbP V,
	\end{equation*}
	where $S$ denotes the graded algebra $S^\bullet V^*$ (so $\bbP V = \Proj S$) and $S_{(\ell)}$ denotes the subring of degree $0$ elements in the localized ring $S_{\ell}$. %
	Note that the maps 
	\begin{equation*}
	\eta_j\colon	S^j V^* \to S_{(\ell)}, \quad g \mapsto g / \ell^j
	\end{equation*}
	assemble into an algebra morphism $\eta: S \to S_{(\ell)}$.
	Composing $\eta$ with the algebra morphism $S_{(\ell)} \twoheadrightarrow A$ induced by the embedding $\Spec A \hookrightarrow D_+(\ell)$ gives an algebra morphism $\varphi \colon S \rightarrow A$.
	After restricting to $S^j V^{*}$, we get maps $\varphi_j \colon S^j V^{*} \rightarrow A$. By construction, the kernel of $\varphi_j$ coincides with $I(Z)_j$, so (taking $j=d$) we can write the following diagram: %
	\[\begin{tikzcd}
{S^d V^{*}} & {S^d V^{*}/ I(Z)_d} & \bbC \\
			{S^dA} & A
			\arrow[two heads, from=1-1, to=1-2]
			\arrow["{f^*}", from=1-2, to=1-3]
			\arrow["{\varphi_d}", hook, from=1-2, to=2-2]
			\arrow["{\varphi_1^{\otimes d}}", from=1-1, to=2-1]
			\arrow["\mu_A^{(d)}", two heads, from=2-1, to=2-2]
			\arrow[dashed, from=2-2, to=1-3]
		\end{tikzcd}\]
	\noindent
	Now the conciseness of $f \in \langle \nu_d (Z) \rangle$ implies in particular that $Z$ cannot be contained in a strict linear subspace of $\bbP V$, i.e.\ that $I(Z)_1=0$. Hence $\varphi_1 \colon V^{*} \rightarrow A$ is injective; since by assumption $\dim A = \dim V$ we find that $\varphi_1$ is a bijection. 
	From the above diagram it follows that also 
	$\varphi_d$ is bijective. Let $\varepsilon \colon A \rightarrow \bbC$ be the composition 
	\[
	f^* \circ \varphi_d^{-1} \colon A \rightarrow S^d V^{*}/ I(Z)_d \rightarrow \bbC^*.
	\] 
	Then again by our diagram $\varphi_1^{\otimes d}$ gives an equivalence between the tensors $\mu^{\smash{(d)}}_{A,\varepsilon}$ and $f$. Since $f$ is concise, we conclude by \Cref{lem:GorensteinTensor} that $A$ is Gorenstein. 
	
	The argument can be reversed: if \eqref{it:gormult} holds, there is an isomorphism $\psi: V^* \to A$ such that the following diagram commutes:
	\[\begin{tikzcd}
		{S^d V^{*}} & \bbC \\
		S^d A & 
		\arrow["{f^*}", from=1-1, to=1-2]
		\arrow["\psi^{\otimes d}"', from=1-1, to=2-1]
		\arrow["\mu_{A,\varepsilon}^{(d)}"', from=2-1, to=1-2]
	\end{tikzcd}\]
	The compositions $\mu_{A}^{(j)} \circ \psi^{\otimes j}$ assemble into an algebra morphism $\varphi\colon S\to A$. If we define $\ell := \psi^{-1}(1_A)$, we obtain an algebra morphism $S_{(\ell)} \to A$ mapping $g/\ell^j$ to $\varphi(g)$. This gives our desired embedding \[\Spec A \xhookrightarrow{\iota} D_+(\ell) \subset \bbP V.\] We need to verify that $f \in \langle \nu_d(Z)\rangle$, i.e.\ that $I(Z)_d \subset \ker f^*$, where $Z=\im \iota$. As before we have $I(Z)_d = \ker \varphi_d$, so this readily follows from the diagram above and the definition $\varphi_d = \mu_{A}^{\smash{(d)}} \circ \psi^{\otimes d}$.
\end{proof}

It follows immediately from the theorem that a concise tensor $f$ in $S^dV$ has minimal \emph{smoothable} rank if and only if $f$ is equivalent to $\mu^{(d-1)}_{A}$ for a \emph{smoothable} Gorenstein algebra $A$. 
If $d \geq n-1$, we can invoke \Cref{prop:BB14} and obtain: 

\begin{corollary}\label{cor:main}
	Let $n$ and $d$ be positive integers such that $d \geq n-1$, and let $V$ be an $n$-dimensional $\bbC$-vector space. Then a concise tensor in $f \in S^dV$ has symmetric border rank $n$ if and only if $f \sim \mu^{\smash{(d-1)}}_A$, where $A$ is a smoothable $n$-dimensional Gorenstein algebra. \qedhere
\end{corollary}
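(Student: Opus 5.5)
The plan is to combine \Cref{prop:BB14} with \Cref{thm:multTensorsAndLinearSpanOfSpec}, in both directions, with $r=n$. The hypothesis $d\geq n-1$ is precisely the condition $r\leq d+1$ needed to apply \Cref{prop:BB14}.

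\emph{Forward direction.} Suppose $f$ is concise with $\brk f = n$. Then $[f]\in\sigma_n(\nu_d(\bbP V))$. By \Cref{prop:BB14}, there is a smoothable subscheme $Z\subset\bbP V$ of degree $k\leq n$ such that $[f]\in\langle\nu_d(Z)\rangle$.

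First I will rule out $k<n$. The span $\langle\nu_d(Z)\rangle$ has projective dimension at most $k-1$. Every element $g$ of its affine cone satisfies $\rk \Cat^{(1)}_g\leq k$, since the same holds for elements of spans of degree-$k$ schemes. One way to see this is to argue by limits, using that $Z$ is smoothable, together with the rank inequality for catalecticants and lower semicontinuity of matrix rank. Alternatively, the flattening of $f$ factors through $H^0(\mathcal{O}_Z)$. Since $f$ is concise, its flattening has rank $n$, which forces $k=n$.

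Now $Z\cong\Spec A$ for a smoothable algebra $A$ of dimension $n=\dim V$. Applying \Cref{thm:multTensorsAndLinearSpanOfSpec} to the embedding $\iota\colon\Spec A\hookrightarrow\bbP V$ with image $Z$ shows that $A$ is Gorenstein and $f\sim\mu_A^{(d-1)}$.

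\emph{Converse direction.} Suppose $A$ is a smoothable Gorenstein algebra of dimension $n$ and $f\sim\mu_A^{(d-1)}$. The implication (2)$\Rightarrow$(1) of \Cref{thm:multTensorsAndLinearSpanOfSpec} gives an embedding $\Spec A\hookrightarrow\bbP V$ whose image $Z$ satisfies $f\in\langle\nu_d(Z)\rangle$. Since $A$ is smoothable, $Z$ is smoothable of degree $n$, so \Cref{prop:BB14} (or simply taking limits) gives $\brk f\leq n$. Conciseness gives $\brk f\geq n$, hence $\brk f = n$.

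The only nontrivial point is the degree count $k=n$ in the forward direction. I expect it to be handled most cleanly by noting that the dual flattening of any $g\in\langle\nu_d(Z)\rangle$ has image inside the linear span of $Z$, viewed as a subspace of $V$. If $k<n$, that span is a proper subspace, because a degree-$k$ scheme spans at most a $\bbP^{k-1}$. This contradicts conciseness.
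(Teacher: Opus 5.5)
Your proposal is correct and follows the paper's route: you combine \Cref{prop:BB14}, which applies since $d\geq n-1$, with \Cref{thm:multTensorsAndLinearSpanOfSpec} in both directions. Your explicit exclusion of smoothable schemes of degree $k<n$ (the span of $\nu_d(Z)$ lies in $\bbP(S^dW)$ with $\dim W\leq k$, contradicting conciseness) fills in a step the paper leaves implicit; it is the same observation as $I(Z)_1=0$ in the proof of the theorem.
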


\subsection{Recovering the algebra as the centroid} To make the correspondence between minimal border (or smoothable) rank polynomials and smoothable Gorenstein algebras complete, we would like that nonisomorphic algebras give rise to nonequivalent tensors. Note that this fails for $d=2$: %
the multiplication tensor $\mu^{\smash{(1)}}_A$ of any algebra is simply the identity matrix.
However for $d \geq 3$ we can use centroids to recover an algebra from its multiplication tensor.
\begin{proposition}\label{proposition:centroid_isomorphic_A}
	Let $A$ be an algebra and, for any $a\in A$, let $\m_a\colon A\to A$ be the multiplication map defined by $\m_a(b)=a \bul b$ for every $b\in A$. Then, for any $d\geq 3$, the centroid of the multiplication tensor $\mu_{A}^{\smash{(d)}}\in S^{d-1}A^*\otimes A$ is
	\[
	\cen{\mu_{A}^{\smash{(d-1)}}}=\Set{(\transpose{{\m_a}},\m_a)|a\in A}\subseteq \End(A^*)\times \End(A).
	\]
	In particular, $\cen{\mu_{A}^{\smash{(d-1)}}}\cong A$ as algebras.
\end{proposition}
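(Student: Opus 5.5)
We work directly with $T\coloneqq\mu_{A}^{\smash{(k)}}\in S^{k}A^*\otimes A$, where $k=d-1\geq 2$ is the number of $A^*$-factors. We view $T$ as the multilinear map $(a_1,\dots,a_k)\mapsto a_1\buls a_k$. Take a general element $(Y_1,\dots,Y_k,Z)$ of $\End(A^*)^{k}\times\End(A)$ and write $Y_i=\transpose{X_i}$ with $X_i\in\End(A)$. By \eqref{definition:comp_i}, the defining equalities of $\cen{T}$ say that for all $a_1,\dots,a_k\in A$ the following elements of $A$ coincide:
\[
a_1\buls X_i(a_i)\buls a_k \quad (i=1,\dots,k),\qquad Z(a_1\buls a_k).
\]
I would avoid \Cref{lemma:partiallySymmetricCentroids1}, since it needs conciseness. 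Instead I would derive the reduction directly by specialising to the unit $1_A$.

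For the inclusion ``$\supseteq$'', take $X_i=Z=\m_a$ for all $i$. Every expression above then equals $a\bul a_1\buls a_k$, by associativity and commutativity of $A$. So $(\transpose{\m_a},\dots,\transpose{\m_a},\m_a)\in\cen{T}$.

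For ``$\subseteq$'', fix $i$ and set $a_j=1_A$ for all $j\neq i$. The equality of the $i$-th expression with the last one gives $X_i(a_i)=Z(a_i)$, so $X_i=Z$ for every $i$. This recovers the partially symmetric shape $(\transpose{Z},Z)$ without any conciseness hypothesis. Next use $k\geq 2$: compare the expressions for $i=1$ and $i=2$ with $a_3=\dots=a_k=1_A$. This gives $Z(a_1)\bul a_2=a_1\bul Z(a_2)$. Putting $a_1=1_A$ yields $Z(a_2)=Z(1_A)\bul a_2$, so $Z=\m_a$ with $a\coloneqq Z(1_A)$. This step is where $d\geq 3$ is needed: for $k=1$ there is only one $A^*$-factor, and the centroid is all of $\End(A)$.

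Finally, consider $A\to\cen{T}$, $a\mapsto(\transpose{\m_a},\m_a)$. It is linear and unital, and it is multiplicative because $\m_{a\bul b}=\m_a\m_b$ and $\transpose{(\m_a\m_b)}=\transpose{\m_b}\,\transpose{\m_a}=\transpose{\m_a}\,\transpose{\m_b}$ by commutativity. It is injective since $\m_a(1_A)=a$. By the previous paragraph it is surjective, so it is an algebra isomorphism $\cen{T}\cong A$. None of the steps is a real obstacle. The only care needed is bookkeeping the transposes in \eqref{definition:comp_i}, and keeping track of the index shift between $\mu_A^{(d)}$ and $\mu_A^{(d-1)}$ in the statement.
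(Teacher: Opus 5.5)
Your proof is correct and follows the same route as the paper: you set inputs equal to $1_A$ to get first $X_i=Z$, so the centroid element has the shape $(\transpose{Z},Z)$, and then $Z=\m_{Z(1_A)}$ by comparing two input slots. The differences are minor. You derive the partially symmetric shape directly instead of tacitly using the pair notation from \Cref{lemma:partiallySymmetricCentroids1}; that lemma would in fact apply, since $\mu_A^{(d-1)}$ is always concise thanks to the unit. You also explicitly check that $a\mapsto(\transpose{\m_a},\m_a)$ is an algebra isomorphism, which the paper leaves implicit.
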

\begin{proof}
	Let $a\in A$. Then, by formula \eqref{definition:comp_i}, we have for $i=1,\ldots,d-1$:
	\begin{align*}
		(\transpose{{\m_a}}\circ_i\mu_{A}^{\smash{(d-1)}})(a_1,\dots,a_{d-1})&=a_1\buls a_{i-1}\bul(a\bul a_i)\bul a_{i+1}\buls a_{d-1}=a\bul(a_1\buls a_{d-1})\\
		&=\m_a (a_1\buls a_{d-1})=({{\m_a}}\circ_{d}\mu_{A}^{\smash{(d-1)}})(a_1,\dots, a_{d-1}),
	\end{align*}
	which implies that $(\transpose{{\m_a}},\m_a)\in 	\cen{\mu_{A}^{\smash{(d-1)}}}$. Conversely, let $(X,Y)\in\cen{\mu_{A}^{\smash{(d-1)}}}$. Then, again by formula \eqref{definition:comp_i}, we have
	\[
	Y(a_1\buls a_{d-1}) = a_1\buls a_{i-1}\bul \transpose{X}(a_i)\bul a_{i+1}\buls a_{d-1}
	\]
	for $i=1,\ldots,d-1$ and any $a_1,\ldots,a_{d-1} \in A$. %
	If we choose $i=1$ and set $a_1=b$ arbitrary and $a_j=1_A$ for every $j=2,\dots,d-1$, we find that $Y(b)=\transpose{X}(b)$ for any $b \in A$. In particular $Y(1_A)=\transpose{X}(1_A) \eqqcolon a$. On the other hand, choosing $i=2$, $a_1=b$ arbitrary, and $a_j=1_A$ for every $j=2,\dots,d-1$ tells us that \[Y(b)=b\bul \transpose{X}(1_A) = b \bul a\] for every $b \in A$. So $Y=\m_a$ and $X=\transpose{Y} = \transpose{{\m_a}}$, completing the proof.
\end{proof}

\section{Classification of minimal border rank polynomials} \label{sec:classification}
 By \Cref{cor:main} and \Cref{proposition:centroid_isomorphic_A}, classifying minimal border rank polynomials in $S^d\bbC^n$ is equivalent to classifying $n$-dimensional smoothable Gorenstein algebras, as long as $d \geq \max\{3,n-1\}$. By \cite{CJN15} all Gorenstein algebras of dimension at most 13 are smoothable. Finite-dimensional algebras up to dimension $7$ have been classified in \cite{Casnati}.
So we can use this classification to list all minimal border rank polynomials in $S^d \bbC^n$ for $n\leq 7$ and $d \geq 3$, except for the cases $(n,d)=(5,3),(6,3),(6,4),(7,3),(7,4),(7,5)$. For the case $(n,d)=(5,3)$ the classification is already known, see \Cref{rmk:S3C5}; we leave the other cases for future work.

\subsection{Reduction to local algebras} It is not hard to see that the iterated multiplication tensor of a product of algebras is the direct sum of the iterated multiplication tensors of the factors. We make this precise in the following lemma (which is phrased in the Gorenstein setting merely for sake of convenience): %

\begin{lemma}
	Let $A$ be a Gorenstein algebra with dual generator $\epsilon$, where $A \cong A_1 \times \cdots \times A_m$. Then every $A_i$ is Gorenstein with dual generator the restriction of $\epsilon$ to $A_i$, and
	\[
	\mu_{A,\epsilon}^{(d)} = \mu_{A_1,\epsilon}^{(d)} + \cdots + \mu_{A_m,\epsilon}^{(d)},
	\]
	where $\smash{\mu_{A_i,\epsilon}^{(d)}}$ is viewed as an element of $\smash{S^d{A^*}}$ via the obvious inclusion $\smash{S^d{A_i^*}} \hookrightarrow \smash{S^d{A^*}}$. 
\end{lemma}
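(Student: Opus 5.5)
The plan is to use the idempotent decomposition of $A$. Write $1_A = e_1 + \cdots + e_m$, where the $e_i$ are the orthogonal idempotents with $A_i = e_i \bul A$. Then $A = A_1 \oplus \cdots \oplus A_m$ as vector spaces, and $A_i \bul A_j = 0$ for $i \neq j$. Dually, $A^* = A_1^* \oplus \cdots \oplus A_m^*$, where we identify $A_i^*$ with the functionals vanishing on $\bigoplus_{j\neq i} A_j$. In this identification the restriction $\epsilon_i \coloneqq \restr{\epsilon}{A_i}$ corresponds to $e_i \cdot \epsilon$, since $(e_i\cdot\epsilon)(a) = \epsilon(e_i \bul a)$ is $\epsilon(a)$ on $A_i$ and $0$ on $A_j$ for $j \neq i$. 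This also explains the meaning of the ``obvious inclusion'' $S^dA_i^* \hookrightarrow S^dA^*$: it is induced by this splitting, i.e.\ by precomposition with the projection $A \to A_i$.

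\textbf{Gorenstein property of the factors.} Let $\varphi \in A_i^*$ and extend it by zero to an element of $A^*$. Since $A\cdot\epsilon = \omega_A$, there is $b\in A$ with $\varphi = b\cdot \epsilon$. Multiplying by $e_i$ gives
\[
\varphi = e_i\cdot\varphi = (e_i\bul b)\cdot\epsilon .
\]
The element $b_i \coloneqq e_i \bul b$ lies in $A_i$, and for $a \in A_i$ we have $(b_i\cdot\epsilon)(a) = \epsilon(b_i\bul a) = \epsilon_i(b_i \bul a)$. Hence $\varphi = b_i \cdot \epsilon_i$ in $\omega_{A_i}$. So $A_i\cdot\epsilon_i = \omega_{A_i}$, i.e.\ $A_i$ is Gorenstein with dual generator $\epsilon_i$. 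Alternatively, one could use \Cref{lem:GorensteinTensor}: conciseness of the sum would force conciseness of each summand. The direct argument above is cleaner, so that is the one to use.

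\textbf{The sum formula.} Both sides are symmetric multilinear forms on $A$, so it suffices to compare them on $d$-tuples $(a_1,\dots,a_d)$. By multilinearity, it further suffices to take each $a_k$ in some summand $A_{i_k}$. If the indices $i_k$ are not all equal, then $a_1\bul\cdots\bul a_d = 0$ by orthogonality of the idempotents, so the left side vanishes. Each right-hand term also vanishes, because $\mu^{(d)}_{A_j,\epsilon}$, viewed in $S^dA^*$, kills any tuple with an entry outside $A_j$. If all $i_k = i$, the left side equals $\epsilon(a_1\bul\cdots\bul a_d) = \epsilon_i(a_1\bul\cdots\bul a_d)$. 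This is exactly the $i$-th term on the right, and the other terms vanish.

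\textbf{Expected difficulty.} The only point requiring care is bookkeeping rather than a genuine obstacle: one has to make the identification of $A_i^*$ inside $A^*$, and of $S^dA_i^*$ inside $S^dA^*$, explicit and consistent with the restriction of $\epsilon$. Once the idempotent splitting is fixed, both claims are direct verifications.
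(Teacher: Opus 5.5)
Your proposal is correct and follows essentially the same route as the paper. For the Gorenstein part, the paper shows that $a\mapsto \restr{(a\cdot\epsilon)}{A_i}$ is surjective and kills the other factors; you say the same thing by projecting $b$ to $b_i=e_i\bul b$ with the idempotent $e_i$. The sum formula is the same check on tuples taken from $A_{i_1}\times\cdots\times A_{i_d}$, and your explicit description of $S^dA_i^*\hookrightarrow S^dA^*$ via the projection $A\to A_i$ simply spells out what the paper calls the obvious inclusion.
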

\begin{proof}
	Since $\epsilon$ is a dual generator, it follows that $\smash{a \mapsto \restr{(a \cdot \epsilon)}{A_i}}$ is a surjection $A \twoheadrightarrow \omega_{A_i}$. However, for $j \neq i$, $a \in A_j$ and $b \in A_i$, we have $(a \cdot \epsilon)(b)= \epsilon(a\bul b) = 0$. So our surjection vanishes on each $A_j$ with $j \neq i$, hence its restriction $A_i \to \omega_{A_i}$ must be a surjection. This shows that $\epsilon$ restricts to a dual generator of $A_i$.  
	From the definition of a direct product of algebras, it follows immediately that the restriction of the multilinear map $\smash{\mu_{A}^{(d)}}$ to $A_{i_1} \times \cdots \times A_{i_d}$ is zero, unless $i_1=\cdots=i_d=i$, in which case it is $\smash{\mu_{A_i}^{(d)}}$. The expression for $\smash{\mu_{A,\epsilon}^{(d)}}$ now easily follows.
\end{proof}

In the language of polynomials this can be stated as follows: if $A \cong A_1 \times \cdots \times A_m$ and the $d$-th multiplication tensor of each $A_i$ is equivalent to a polynomial $F_i(x_1,\ldots,x_{n_i})$, then the $d$-th multiplication tensor of $A$ is equivalent to
\begin{equation} \label{eq:polynomial_of_product_algebra}
F_1(x_1, \ldots, x_{n_1}) + F_2(x_{n_1+1},\ldots, x_{n_1+n_2}) + \cdots + F_n(x_{n_1+\cdots+n_{m-1}+1}, \ldots, x_{n_1+\cdots+n_m}).
\end{equation}
A finite-dimensional Gorenstein algebra can be written uniquely as a direct product of local Gorenstein algebras. If $d \geq n-1$, it follows that a polynomial is of minimal border rank if and only if it can be written as a direct sum (as in \eqref{eq:polynomial_of_product_algebra}) of polynomials arising from local Gorenstein algebras, and moreover that if such a decomposition exists it is unique. In the rest of this section we therefore restrict ourselves to classifying minimal border rank polynomials arising from local Gorenstein algebras, we call these \emph{indecomposable}. %

\subsection{Computing iterated multiplication tensors}
In this section we present an algorithm that computes the multiplication tensor of a given local Gorenstein algebra. An implementation in Macaulay2 is presented in \autoref{Appendix}.

\begin{algorithm}\label{algo:evaluationTensor}
The input is an ideal $I \subset S \coloneqq \bbC[t_1,\ldots,t_r]$ whose radical is $(t_1,\ldots,t_r)$; this ensures that $A \coloneqq S/I$ is a finite local algebra. The steps of the algorithm are as follows: 
\begin{enumerate}
	\item Compute %
	monomials $ \{b_1,\ldots,b_n\} \in S$ whose classes modulo $I$ form a basis $B$ 
	of $A=S/I$, with $b_1=1$. %
	This can be done by computing a Gr\"obner basis of $I$, the $b_i$ are the standard monomials of the initial ideal of $I$. 
	\item \label{it:algoStep2} Compute the socle of $A$. If $\Soc(A)$ is not principal, stop here ($A$ is not local Gorenstein). Otherwise, pick a generator, expand it in the basis $B$, and choose one of the basis vectors occurring with a nonzero coefficient. Call this basis vector $b$ and its dual basis vector $\epsilon \in A^*$. %
	\item \label{it:socDeg} Compute the largest $m$ for which $\mfm_A^m \neq 0$. This is the \emph{socle degree} of $A$.
	\item Output the following polynomial:
	\begin{equation}\label{eq:algoOutput}
	\sum_{s=0}^{m}\sum_{\substack{0\leq j_2,\dots,j_n\leq s\\j_2+\cdots+j_n=s}}\epsilon(b_{2}^{\bul j_2}\bul\cdots\bul b_{n}^{\bul j_n})x_1^{[d-s]}x_{2}^{[j_2]}\cdots x_{n}^{[j_n]}.
	\end{equation}
	Here $\epsilon(a)$ is computed in practice by expanding $a \in S/I$ in the basis $B$, and taking the coefficient of $b$.
\end{enumerate} 
\end{algorithm}
\begin{example}
	We return to \Cref{exam:mult_x_4}. The socle is generated by $t^3$ so we must pick $b=t^3=b_4$. The socle degree $m$ is equal to $3$. There are only $3$ nonzero summands appearing in \eqref{eq:algoOutput}, corresponding to the products $b_4=b_2b_3=b_2^3$, and the algorithm correctly outputs \eqref{eq:example_polynomial_div_power}. 	
\end{example}
Note that in the above algorithm we can specify the number $d$, but we can also leave it indeterminate. %
That is, for a fixed $A$ we are really computing all iterated multiplication tensors at the same time. The following proposition shows that our algorithm is correct:
\begin{proposition}
	If $I$ in \Cref{algo:evaluationTensor} defines a Gorenstein algebra $A=S/I$, then the $\epsilon$ computed in {\rm{(\ref{it:algoStep2})}} is a dual generator, and \eqref{eq:algoOutput} is, up to a factor $d!$, equal to the evaluation tensor $\mu^{\smash{(d)}}_{ A,\epsilon}$.
\end{proposition}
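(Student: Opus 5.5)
Two things need checking: that $\epsilon$ is a dual generator, and that \eqref{eq:algoOutput} agrees with $d!\,\mu^{(d)}_{A,\epsilon}$ as written in \eqref{eq:mult_tensor_as_polynomial_div_power}.

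\textbf{$\epsilon$ is a dual generator.} Since $A$ is local Gorenstein, \Cref{prop:local_Gorenstein_socle} says $\Soc A$ is one-dimensional, spanned by the chosen generator $s$. Write $s=\sum_i c_i b_i$. The basis vector $b$ is chosen among those with $c_i\neq 0$, and $\epsilon$ is its dual basis vector, so $\epsilon(s)$ equals the coefficient of $b$ in $s$, which is nonzero. Hence $\epsilon(\Soc A)\neq 0$, and the final statement of \Cref{prop:local_Gorenstein_socle} shows that $\epsilon$ is a dual generator.

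\textbf{The formula.} Start from \eqref{eq:mult_tensor_as_polynomial_div_power}, which writes $d!\,\mu^{(d)}_{A,\epsilon}$ in the basis $b_1,\dots,b_n$ with $b_1=1$. Since $b_1=1_A$, each coefficient satisfies
\[
\epsilon(b_1^{\bul j_1}\bul b_2^{\bul j_2}\bul\cdots\bul b_n^{\bul j_n})=\epsilon(b_2^{\bul j_2}\bul\cdots\bul b_n^{\bul j_n}).
\]
Set $s=j_2+\cdots+j_n$, so that $j_1=d-s$. Each term is then
\[
\epsilon(b_2^{\bul j_2}\bul\cdots\bul b_n^{\bul j_n})\,x_1^{[d-s]}x_2^{[j_2]}\cdots x_n^{[j_n]}.
\]
Regrouping the sum by $s$, with $0\le s\le d$, gives exactly the shape of \eqref{eq:algoOutput}, except that the outer index runs up to $d$ instead of $m$. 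For $s>d$ the exponent $d-s$ would be negative and no such term exists; if $m>d$, the terms with $s>d$ in \eqref{eq:algoOutput} are read as $0$ via the convention $x_1^{[k]}=0$ for $k<0$.

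\textbf{Truncating the range of $s$.} This is the only real point. The standard monomials $b_2,\dots,b_n$ all lie in $\mfm_A$: $A$ is local with maximal ideal generated by the classes of $t_1,\dots,t_r$, and every nonconstant monomial lies in that ideal. The constant monomial is $b_1$ alone, because the classes $b_i$ form a basis and are therefore distinct. Consequently, when $s=j_2+\cdots+j_n>m$, the product $b_2^{\bul j_2}\bul\cdots\bul b_n^{\bul j_n}$ lies in $\mfm_A^s=0$, and its coefficient vanishes. The sums over $0\le s\le d$ and over $0\le s\le m$ therefore agree, with the convention above covering the case $m>d$.

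The main subtlety is bookkeeping: keeping $b_1=1$ separate from the other basis elements, and checking that the divided power normalisation in \eqref{eq:mult_tensor_as_polynomial_div_power} is exactly what absorbs the multinomial coefficients.
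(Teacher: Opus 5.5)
Your proof is correct and follows the paper's argument: the paper likewise observes that $b_2,\dots,b_n$ lie in $\mfm_A$, so every product with $j_2+\cdots+j_n>m$ vanishes, and then compares \eqref{eq:algoOutput} with \eqref{eq:mult_tensor_as_polynomial_div_power}. You also fill in two details the paper leaves implicit: the check that $\epsilon$ is a dual generator via the final statement of \Cref{prop:local_Gorenstein_socle}, and the convention for the case $m>d$.
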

\begin{proof}
	By construction, every $b_i$ with $i>1$ lies in the maximal ideal of $A$, hence every product $b_2^{\bul j_2}\bul\cdots\bul b_{n}^{\bul j_n}$ with $j_2+\cdots+j_n>m$ is zero. The result now follows by comparing \eqref{eq:algoOutput} and \eqref{eq:mult_tensor_as_polynomial_div_power}. 
\end{proof}
We ran the algorithm above on all ideals from \cite{Casnati}. The results are summarized in \Cref{tab:MainTable}. %
The ideals in the left column live in a polynomial ring generated by the variables that appear in that ideal.

\begin{table}
	\NiceMatrixOptions{cell-space-top-limit=6pt}
	\resizebox{1\textwidth}{!}{%
\begin{NiceTabular}{p{0.94cm}p{5.2cm}p{10.8cm}}[corners=NW,hvlines]
	\CodeBefore 
	\rowcolors[gray]{2}{0.97}{}[cols=2-3,restart] \Body 
	 & \textbf{Ideal of Gorenstein algebra} & \textbf{Form of minimal border rank}\\ 
	$n=1$ & $(t_1)$ & $x_1^{[d]}$ \\ 
	$n=2$ & $(t_1^2)$ & $x_1^{[d-1]}x_2$ \\ 
	$n=3$ &$(t_1^3)$ & $x_1^{[d-1]}x_3+x_1^{[d-2]}x_2^{[2]}$ \\ 
	\Block{2-1}{$n=4$}
	&$(t_1^2-t_2^2,t_1t_2)$ & $x_1^{[d-1]}x_4+x_1^{[d-2]}x_2^{[2]}+x_1^{[d-2]}x_3^{[2]}$ \\ 
		& $(t_1^4)$ &$x_1^{[d-1]}x_4+x_1^{[d-2]}x_2x_3+x_1^{[d-3]}x_2^{[3]}$
 \\ 
	\Block{3-1}{$n=5$}
	&$(t_1^2-t_i^2)_{2\leq i\leq 3}+(t_it_j)_{1\leq i<j\leq 3}$ & $x_1^{[d-1]}x_5+x_1^{[d-2]}x_2^{[2]}+x_1^{[d-2]}x_3^{[2]}+x_1^{[d-2]}x_4^{[2]}$\\
	&$(t_1^3-t_2^2,t_1t_2)$ & $x_1^{[d-1]}x_5+x_1^{[d-2]}x_4^{[2]}+x_1^{[d-2]}x_2x_3+x_1^{[d-3]}x_2^{[3]}$ \\
	&	$(t_1^5)$ & $x_1^{[d-1]}x_5+x_1^{[d-2]}x_2x_4+x_1^{[d-2]}x_3^{[2]}+x_1^{[d-3]}x_2^{[2]}x_3+x_1^{[d-4]}x_2^{[4]}
	$\\
	\Block{6-1}{$n=6$}
	&$(t_1^2-t_i^2)_{2\leq i\leq 4}+(t_it_j)_{1\leq i<j\leq 4}$ & $x_1^{[d-1]}x_6+x_1^{[d-2]}x_2^{[2]}+x_1^{[d-2]}x_3^{[2]}+x_1^{[d-2]}x_4^{[2]}+x_1^{[d-2]}x_5^{[2]}$\\
	&$(t_1^3-t_1^2t_2, t_2^2)$ & $x_1^{[d-1]}x_4
	+x_1^{[d-2]}x_2x_3+x_1^{[d-2]}x_2x_5+x_1^{[d-2]}x_3x_6+x_1^{[d-3]}x_2^{[2]}x_6\vspace*{0.1cm}\newline+x_1^{[d-3]}x_2^{[3]}
	$\\
	&$(t_1^3-t_i^2)_{2\leq i\leq 3}+(t_it_j)_{1\leq i<j \leq 3}$ & $x_1^{[d-1]}x_6+x_1^{[d-2]}x_2x_3+x_1^{[d-2]}x_4^{[2]}+x_1^{[d-2]}x_5^{[2]}+x_1^{[d-3]}x_2^{[3]}$\\
    &$(t_1^3-t_2^3,t_1t_2)$&$x_1^{[d-1]}x_6+x_1^{[d-2]}x_2x_3
    +x_1^{[d-2]}x_4x_5+x_1^{[d-3]}x_2^{[3]}+x_1^{[d-3]}x_4^{[3]}$\\
	&$(t_1^4-t_2^2,t_1t_2)$ & $x_1^{[d-1]}x_6+x_1^{[d-2]}x_2x_4+x_1^{[d-2]}x_3^{[2]}+x_1^{[d-2]}x_5^{[2]}+x_1^{[d-3]}x_2^{[2]}x_3\vspace*{0.1cm}\newline+x_1^{[d-4]}x_2^{[4]}$\\
	& $(t_1^6)$ & 
	$x_1^{[d-1]}x_6+x_1^{[d-2]}x_2x_5+x_1^{[d-2]}x_3x_4+x_1^{[d-3]}x_2^{[2]}x_4+x_1^{[d-3]}x_2x_3^{[2]}\vspace*{0.1cm}\newline+x_1^{[d-4]}x_2^{[3]}x_3+x_1^{[d-5]}x_2^{[5]}
	$\\
	\Block{9-1}{$n=7$}
	&$(t_1^2-t_i^2)_{2\leq i\leq 5}+(t_it_j)_{1\leq i<j\leq 5}$&
	$x_1^{[d-2]}x_2^{[2]}+x_1^{[d-2]}x_3^{[2]}+x_1^{[d-2]}x_4^{[2]}
	+x_1^{[d-2]}x_5^{[2]}+x_1^{[d-2]}x_6^{[2]}+x_1^{[d-1]}x_7$\\
	&$(t_1^2-t_i^3)_{2\leq i\leq 4}+(t_it_j)_{1\leq i<j\leq 4}$&
	$x_1^{[d-1]}x_7+x_1^{[d-2]}x_2x_3+x_1^{[d-2]}x_4^{[2]}+x_1^{[d-2]}x_5^{[2]}+x_1^{[d-2]}x_6^{[2]}+x_1^{[d-3]}x_2^{[3]}$\\
		&$(t_1^3-t_2^2,t_1^2t_2)$&$x_1^{[d-1]}x_5
		+x_1^{[d-2]}x_2x_7
		+x_1^{[d-2]}x_3^{[2]}
		+x_1^{[d-2]}x_4x_6
		+x_1^{[d-3]}x_2x_6^{[2]}
		\vspace*{0.1cm}\newline+x_1^{[d-3]}x_2^{[2]}x_3
		+x_1^{[d-4]}x_2^{[4]}$\\
	&$(t_1^3-t_2^3, t_1^3-t_3^2)+(t_it_j)_{1\leq i<j\leq 3}$& $x_1^{[d-1]}x_7+x_1^{[d-2]}x_2x_3+x_1^{[d-2]}x_4x_5+x_1^{[d-2]}x_6^{[2]}+x_1^{[d-3]}x_2^{[3]}+x_1^{[d-3]}x_4^{[3]}$\\
	&$( t_1^3-t_1^2t_2, t_1^3-t_3^2, t_1t_3, t_2^2,t_2t_3)$&$x_1^{[d-1]}x_7
	+x_1^{[d-2]}x_2x_3
	+x_1^{[d-2]}x_2x_4
	+x_1^{[d-2]}x_3x_5
	+x_1^{[d-2]}x_6^{[2]}
	\vspace*{0.1cm}\newline+x_1^{[d-3]}x_2^{[2]}x_5
	+x_1^{[d-3]}x_2^{[3]}$\\
		&$(t_1^4-t_i^2)_{2\leq i\leq 3}+(t_it_j)_{1\leq i<j\leq 3}$&
	$x_1^{[d-1]}x_7+x_1^{[d-2]}x_2x_4+x_1^{[d-2]}x_3^{[2]}+x_1^{[d-2]}x_5^{[2]}+x_1^{[d-2]}x_6^{[2]}\vspace*{0.1cm}\newline+x_1^{[d-3]}x_2^{[2]}x_3+x_1^{[d-4]}x_2^{[4]}$
	\\
	&$(t_1^4-t_2^3,t_1t_2)$&$x_1^{[d-1]}x_7
	+x_1^{[d-2]}x_2x_4+x_1^{[d-2]}x_3^{[2]}+x_1^{[d-2]}x_5x_6+x_1^{[d-3]}x_2^{[2]}x_3\vspace*{0.1cm}\newline+x_1^{[d-3]}x_5^{[3]}+x_1^{[d-4]}x_2^{[4]}$\\
	&$(t_1^5-t_2^2,t_1t_2)$& $x_1^{[d-1]}x_7+x_1^{[d-2]}x_2x_5+x_1^{[d-2]}x_3x_4+x_1^{[d-2]}x_6^{[2]}\vspace*{0.1cm}\newline+x_1^{[d-3]}x_2^{[2]}x_4+x_1^{[d-3]}x_2x_3^{[2]}+x_1^{[d-4]}x_2^{[3]}x_3+x_1^{[d-5]}x_2^{[5]}$\\
	&$(t_1^7)$&$x_1^{[d-1]}x_7+x_1^{[d-2]}x_2x_6+x_1^{[d-2]}x_3x_5+x_1^{[d-2]}x_4^{[2]}+x_1^{[d-3]}x_2^{[2]}x_5\vspace*{0.1cm}\newline+x_1^{[d-3]}x_2x_3x_4+x_1^{[d-3]}x_3^{[3]}+x_1^{[d-4]}x_2^{[3]}x_4+x_1^{[d-4]}x_2^{[2]}x_3^{[2]}\vspace*{0.1cm}\newline+x_1^{[d-5]}x_2^{[4]}x_3+x_1^{[d-6]}x_2^{[6]}$
	\end{NiceTabular}
}
\caption{All indecomposable polynomials of minimal border rank in $n \leq 7$ variables, as long as $d \geq \max\{3,n-1\}$.} \label{tab:MainTable}
\end{table}

\begin{remark}\label{rmk:S3C5}
	If $d < n-1$ the polynomials in \Cref{tab:MainTable} still have minimal border rank, after setting any terms with a negative exponent in $x$ to $0$. However, there are additional polynomials of minimal border rank not in the table. For instance, in the case $(n,d)=(5,3)$, the polynomial
	\begin{equation}\label{formula:case_n=5_d=3}
		f=x_1^{[2]}x_3+x_1^{[2]}x_4+x_1x_2x_4+x_2^{[2]}x_4+x_2^{[2]}x_5,
	\end{equation}
	has border rank $5$ but smoothable rank $6$ \cite[Section 4.1]{BB15}. In fact, one can show that up to equivalence this is the only polynomial of minimal border rank $5$ that does not appear in \Cref{tab:MainTable}, see \cite[Theorem 10.5]{LT10} or \cite[Example 4.6]{JLP24}. (The latter reference classifies all order $3$ tensors of minimal border rank $5$ and notes there is a single symmetric case which is not a multiplication tensor.)
\end{remark}

\subsection{An infinite family of minimal border rank polynomials} 
	For $n=8$ there are infinitely many nonisomorphic Gorenstein algebras. This implies that for any degree $d\geq 3$ there are infinitely many nonequivalent polynomials of minimal border rank in $8$ variables. We can construct an explicit family of such algebras as apolar algebras of plane cubics. Let us recall the definition:
	\begin{definition}
		Given a polynomial $f \in \bbC[x_1,\ldots,x_n]$, its \emph{apolar ideal} is the ideal $f^{\perp} \subset \bbC[\partial_1,\ldots,\partial_n]$ of differential operators annihilating $f$. The quotient $\operatorname{ap}(f)=\bbC[\partial_1,\ldots,\partial_n]/{f^{\perp}}$ is called the \emph{apolar algebra}. 
	\end{definition}
	We need the following facts about apolar algebras, see for instance Theorem 3.26 and Remark 3.33 in \cite{Jel22}:
	\begin{itemize}
		\item The apolar algebra $\operatorname{ap}(f)$ of a polynomial is a finite local Gorenstein algebra. 
		\item If $f,g \in \bbC[x_1,\ldots,x_n]_d$ are \emph{homogeneous} polynomials, the apolar algebras $\operatorname{ap}(f)$ and $\operatorname{ap}(g)$ are isomorphic is and only if $f$ and $g$ agree up to a linear change of variables. 
	\end{itemize}
	\begin{example}\label{ex:elliptic}
	For every $a,b \in \bbC\setminus \{0\}$, we can consider homogeneous cubic
	\[
	F_{a,b}(x,y,z) := y^2z-x^3-axz^2-bz^3 \in \bbC[x,y,z]_3.
	\]
	Two such cubics $F_{a,b}$ and $F_{a',b'}$ agree up to a linear change of variables if and only if $b^2/a^3 = b'^2/a'^3$ (this is well known from the theory of elliptic curves, but also not hard to verify by hand). The apolar ideal of $F_{a,b}$ is given by
	\[
	F_{a,b}^\perp = (\partial_x\partial_y,a\partial_y^2+\partial_x\partial_z,a\partial_x^2-3\partial_z^2-9b\partial_y^2).
	\] 
	Using \Cref{algo:evaluationTensor}, we can compute the corresponding polynomial of minimal border rank as
	\begin{align*}
	d!\mu_{F_{a,b}^{\perp},\epsilon}^{(d)} = \frac{1}{3b}\Bigl( &3x_1^{[d-3]}x_2^{[3]}-x_1^{[d-2]}x_4x_5+ax_1^{[d-2]}x_3x_6-x_1^{[d-3]}x_4^{[2]}x_6\\ &+ax_1^{[d-3]}x_2x_6^{[2]}+3bx_1^{[d-3]}x_6^{[3]}+ax_1^{[d-2]}x_2x_7+3bx_1^{[d-2]}x_6x_7+3bx_1^{[d-1]}x_8\Bigr).
	\end{align*}

	For every fixed $d\geq 3$, this yields infinitely many nonequivalent polynomials of minimal border rank in 8 variables. 
	
\end{example}

\begin{remark} \label{rmk:8_9_variables}
{In \cite{Casnati}, there is not only a list of all Gorenstein algebras of dimension $n\leq 7$, but also explicit classifications of Gorenstein algebras of dimensions $8$ and $9$. These classifications each consist of a finite list of algebras, and one (if $n=8$) or two (if $n=9$) one-parameter families. The one-parameter family provided for $n=8$ agrees up to isomorphism with the one from \Cref{ex:elliptic}. We used \Cref{algo:evaluationTensor} to compute also these cases, providing a complete classification of minimal border rank polynomials in $n\leq 9$ variables (and $d \geq n-1$ as usual). The full list of these polynomials for $n=8,9$ is given in the auxiliary files.}
\end{remark}

\subsection{Testing minimal border rank} 
 The following result by Jagie{\l}{\l}a and Jelisiejew characterizes multipication tensors of Gorenstein algebras in terms of centroids and $1$-genericity:
 \begin{proposition}[{\cite[Corollary 4.14]{JJ26}}]\label{proposition_JJ26}
 	Let $d\geq 3$ and let $T\in V^{\otimes d}$ be a concise tensor such that $\dim(\cen{T})\geq \dim V$. Then the following are equivalent:
 	\begin{enumerate}[label=(\roman*)]
 		\item	$T$ is 1-generic;
 		\item $T\sim \mu_{A,\epsilon}^{\smash{(d)}}$ for some algebra $A$ and some $\epsilon\in \omega_A$.
 	\end{enumerate}
 	If these conditions hold, then $A$ is Gorenstein, $A\cong\cen{T}$, and $\omega_A=A\cdot\epsilon$.
 \end{proposition}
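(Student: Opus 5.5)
We prove the two implications separately, using throughout that, for $T$ concise and $d\geq 3$, the centroid $C:=\cen{T}$ is a commutative unital subalgebra of $\End(V_1)\times\cdots\times\End(V_d)$, as recalled in the preliminaries. Write $n=\dim V$, and let $V_i$ denote the $i$-th copy of $V$. We view $T$ as a multilinear form on $V_1^*\times\cdots\times V_d^*$.

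\emph{(ii) $\Rightarrow$ (i).} Assume $T\sim\mu^{(d)}_{A,\epsilon}$. First, $A$ is Gorenstein with dual generator $\epsilon$. Indeed, the flattening of $T$ onto one factor has image contained in $A\cdot\epsilon$, exactly as in the proof of \Cref{lem:GorensteinTensor}, and conciseness forces $\dim A=n$ and $A\cdot\epsilon=\omega_A$. Now contract $T$ in the $i$-th factor with $1_A\in A=(A^*)^*$. This gives $\mu^{(d-1)}_{A,\epsilon}$, viewed as an element of $(A^*)^{\otimes d-1}$. By \Cref{lem:GorensteinTensor} this tensor is concise, since $d-1\geq 2$. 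Hence $T$ is $1_{V_i}$-generic for every $i$. The same argument also yields the final claims: $\omega_A=A\cdot\epsilon$, and $C\cong A$ by (a non-symmetric analogue of) \Cref{proposition:centroid_isomorphic_A}.

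\emph{(i) $\Rightarrow$ (ii).} For each $i$, choose $\eta_i\in V_i^*$ such that $T_i(\eta_i)$ is concise.
\begin{enumerate}
\item Consider the linear map $\Phi_i\colon C\to V_i^*$, $X\mapsto \transpose{X_i}(\eta_i)$. We show it is injective. If $\transpose{X_i}\eta_i=0$, then for $j\neq i$ we get $X_j\circ_j T_i(\eta_i)=(X_i\circ_iT)(\eta_i)=0$. Conciseness of $T_i(\eta_i)$ in the factor $V_j$ forces $X_j=0$. Then $X_j\circ_jT=0$, so every $X_k\circ_k T=0$, and conciseness of $T$ gives $X=0$. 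Combined with the hypothesis $\dim C\geq n$, each $\Phi_i$ is an isomorphism, and $\dim C=n$.
\item Set $A:=C$ and $\epsilon(a):=T(\transpose{a_1}\eta_1,\eta_2,\dots,\eta_d)$ for $a\in A$. Using the centroid relations, we move each $a^{(k)}$ in turn from slot $k$ to slot $1$, and commutativity of $C$ lets us combine them there. This gives
\[
T\bigl(\Phi_1(a^{(1)}),\dots,\Phi_d(a^{(d)})\bigr)=T\bigl(\transpose{(a^{(1)}\bul\cdots\bul a^{(d)})_1}\,\eta_1,\eta_2,\dots,\eta_d\bigr)=\epsilon(a^{(1)}\bul\cdots\bul a^{(d)}).
\]
So the isomorphisms $\Phi_i$ transport $T$ to $\mu^{(d)}_{A,\epsilon}$, and therefore $T\sim\mu^{(d)}_{A,\epsilon}$.
\end{enumerate}
The remaining claims then follow as in the first implication: $A$ is Gorenstein with $\omega_A=A\cdot\epsilon$, and indeed $A=C=\cen{T}$ by construction.

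The main obstacle is step 1, the injectivity of $\Phi_i$, together with the bookkeeping in step 2. The delicate point is that the vanishing of $X_i$ is deduced from the vanishing of $X_j$ in a \emph{different} slot, via conciseness of the contraction $T_i(\eta_i)$. This is exactly where $1$-genericity enters, and it is also where $d\geq 3$ is used: it guarantees that $T_i(\eta_i)$ has at least two factors, and that $C$ is commutative.
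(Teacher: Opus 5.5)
The paper gives no proof of this statement. It imports it from \cite[Corollary 4.14]{JJ26} and uses it as a black box in \Cref{proposition:equivalence_Gorenstein_algebra}. Your argument is therefore an independent, self-contained route, and its core, the implication (i) $\Rightarrow$ (ii), is correct.

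\textbf{The core argument.} Injectivity of $\Phi_i\colon\cen{T}\to V_i^*$ is the right key step. If $\transpose{X_i}\eta_i=0$, contracting $X_j\circ_j T=X_i\circ_i T$ in slot $i$ against $\eta_i$ gives $X_j\circ_j T_i(\eta_i)=0$. Conciseness of $T_i(\eta_i)$ in slot $j$ then kills $X_j$, and conciseness of $T$ kills $X_i$. The transport computation in step 2 is also sound. Each relation $X_k\circ_k T=X_1\circ_1 T$ is an identity of tensors, so it may be evaluated with the other slots already modified. Commutativity of $\cen{T}$, which needs three distinct slots and hence $d\geq 3$, lets you collect the factors in slot $1$.

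Compared with the citation, your route uses only conciseness and commutativity of the centroid. It also yields, as a by-product, $\dim\cen{T}\le\dim V$ for every concise $T$ that is $1_{V_i}$-generic for a single $i$, so the abundance hypothesis is really an equality. Finally, it realizes $A$ literally as $\cen{T}$, so the claim $A\cong\cen{T}$ is built in rather than deduced. The citation instead places the result inside the general framework of \cite{JJ26}.

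\textbf{A false step in (ii) $\Rightarrow$ (i).} Conciseness of $T$ does not force $\dim A=n$. The flattening of $\mu^{(d)}_{A,\epsilon}$ has image $A\cdot\epsilon$, so equivalence with a concise $T$ only gives $\dim(A\cdot\epsilon)=n$. The paper's notion of equivalence allows $\dim A>n$.

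For a concrete case, take $A=\bbC[s,t]/(s,t)^2$ and let $\epsilon$ be the functional with $\epsilon(s)=1$ and $\epsilon(1)=\epsilon(t)=0$. Then $\mu^{(d)}_{A,\epsilon}=d\,x_1^{d-1}x_2$, which is equivalent to the concise binary form $x_1^{d-1}x_2$. This form satisfies (i) and (ii) and has a $2$-dimensional centroid. Yet $A$ is not Gorenstein, since its socle is $2$-dimensional, and $\dim A=3$.

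So the final sentence of the statement holds only when $\dim A=\dim V$, i.e.\ when the equivalence in (ii) is an isomorphism $V\cong A^*$. That is the intended reading and the one the paper uses. You should state it as an assumption rather than claim to derive it.

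For the equivalence (i) $\Leftrightarrow$ (ii) itself nothing is lost; here is the fix:
\begin{enumerate}
\item The set $\operatorname{Ann}(\epsilon)=\{a\in A : a\cdot\epsilon=0\}$ is an ideal, and $\epsilon$ descends to $\bar A=A/\operatorname{Ann}(\epsilon)$.
\item The map $\bar A\to\bar A^*$, $\bar a\mapsto\bar a\cdot\bar\epsilon$, is injective, so $\bar A\cdot\bar\epsilon=\omega_{\bar A}$.
\item $\mu^{(d)}_{A,\epsilon}$ is the pullback of $\mu^{(d)}_{\bar A,\bar\epsilon}$ along $A\to\bar A$, so $T\sim\mu^{(d)}_{\bar A,\bar\epsilon}$ with $\dim\bar A=\dim(A\cdot\epsilon)=n$.
\item Your contraction-with-$1$ argument then applies verbatim to $\bar A$.
\end{enumerate}
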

In the symmetric case, as long as $n \leq 13$ and $d \geq \max\{3,n-1\}$, having minimal border rank is the same as being equivalent to $\mu_{A}^{\smash{(d-1)}}$ for some Gorenstein algebra $A$. So in this regime, we can use \Cref{proposition_JJ26} to give a practical test for verifying whether a given polynomial has minimal border rank: 
\begin{proposition}\label{proposition:equivalence_Gorenstein_algebra}
	Let $f\in S^dV$ be a concise tensor. Then the following are equivalent:
	\begin{enumerate}[label=(\roman*)]
		\item\label{prop_equiv_item(1)} $\dim (\cen{f})\geq \dim V$ and $\det\bigl(\Hess(f)\bigr)\neq 0$;
		\item\label{prop_equiv_item(2)} $f\sim \mu_{A}^{\smash{(d-1)}}$ for some Gorenstein algebra $A$.
	\end{enumerate}
\end{proposition}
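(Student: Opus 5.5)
The plan is to derive the statement from \Cref{proposition_JJ26}, using \Cref{proposition_HMV20} to turn $1$-genericity into the Hessian condition and \Cref{lem:GorensteinTensor} to pass between evaluation tensors and iterated multiplication tensors. One bookkeeping point comes first. \Cref{proposition_JJ26} is stated for $T\in V^{\otimes d}$ and the centroid of the full tensor. By \Cref{lemma:partiallySymmetricCentroids1}, when $f$ is concise and $d\geq 3$, every element of $\cen{f}$ has all components equal. So the centroid of $f$ viewed in $V^{\otimes d}$ is the same as the symmetric centroid described in \Cref{lem:centroidHessian}, and the two notions of $\dim(\cen{f})$ agree. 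Throughout we assume $d\geq 3$, as in the surrounding discussion; for $d=2$ the equivalence would have to be checked separately or excluded.

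For \ref{prop_equiv_item(1)}$\Rightarrow$\ref{prop_equiv_item(2)}: since $f$ is concise and $\det(\Hess(f))\neq 0$, \Cref{proposition_HMV20} gives that $f$ is $1$-generic. Together with $\dim(\cen{f})\geq \dim V$, \Cref{proposition_JJ26} yields $f\sim\mu_{A,\epsilon}^{(d)}$ for some algebra $A$, with $A$ Gorenstein and $A\cdot\epsilon=\omega_A$. Then $\epsilon$ is a dual generator, and the last part of \Cref{lem:GorensteinTensor} gives $\mu_{A,\epsilon}^{(d)}\sim\mu_A^{(d-1)}$. By transitivity, $f\sim\mu_A^{(d-1)}$.

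For \ref{prop_equiv_item(2)}$\Rightarrow$\ref{prop_equiv_item(1)}:
\begin{enumerate}
	\item Choose a dual generator $\epsilon$ of $A$. By \Cref{lem:GorensteinTensor}, $f\sim\mu_{A,\epsilon}^{(d)}$ and $\dim A=\dim V$, since equivalent concise tensors live in spaces of equal dimension.
	\item The implication (ii)$\Rightarrow$(i) of \Cref{proposition_JJ26} needs the centroid hypothesis as input, so we establish it directly. Equivalence via an isomorphism $F$ conjugates centroids, so $\dim\cen{f}=\dim\cen{\mu_A^{(d-1)}}$. By \Cref{proposition:centroid_isomorphic_A}, $\cen{\mu_A^{(d-1)}}\cong A$, hence $\dim\cen{f}=\dim A=\dim V$.
	\item Now both hypotheses of \Cref{proposition_JJ26} hold, and condition (ii) there is satisfied. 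So $f$ is $1$-generic, and \Cref{proposition_HMV20} gives $\det(\Hess(f))\neq 0$.
\end{enumerate}
As an alternative to step 3, one can show $1$-genericity directly: take $\eta\in A^*\cong V^{*}$ corresponding to $1_A$. Then $\Cat^{(d-2)}_f(\eta^{d-2})$ is the bilinear form $(a,b)\mapsto\epsilon(a\bul b)$, which is nondegenerate because $\epsilon$ is a dual generator.

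I expect the main obstacle to be the identification of centroids across the different settings. The centroid of $\mu_A^{(d-1)}\in S^{d-1}A^*\otimes A$ must be matched with the centroid of the fully symmetric $\mu_{A,\epsilon}^{(d)}$ in $V^{\otimes d}$ through the non-symmetric equivalence $\id^{\otimes d-1}\otimes\theta_\epsilon$. I would handle this by noting that restriction by invertible maps transports the centroid conjugately, $(X_i)\mapsto(F_iX_iF_i^{-1})$. This is immediate from the definition of $\circ_i$ and preserves dimension, so it suffices even though the target tensor is symmetric and the source is only partially symmetric.
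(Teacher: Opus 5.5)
Your proposal is correct and follows essentially the same route as the paper's proof. For (i)$\Rightarrow$(ii) you use \Cref{proposition_HMV20}, then \Cref{proposition_JJ26}, then \Cref{lem:GorensteinTensor}; for the converse you get the centroid dimension from \Cref{proposition:centroid_isomorphic_A} and then 1-genericity and the Hessian from \Cref{proposition_JJ26} and \Cref{proposition_HMV20}, and your remarks on $d\geq 3$ and on equivalences conjugating centroids make explicit what the paper leaves implicit. Your alternative catalecticant argument for $1$-genericity also works, with one correction: the element corresponding to $1_A$ lies in $A\cong V^*$, not in $A^*$.
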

\begin{proof}%
	Let $\dim (\cen{f})\geq \dim V$ and $\Hess(f)\neq 0$. By \Cref{proposition_HMV20}, we have that $f$ is $1$-generic, so we can apply \Cref{proposition_JJ26} and \Cref{lem:GorensteinTensor} to obtain item \ref{prop_equiv_item(2)}. 
	Conversely, if \ref{prop_equiv_item(2)} holds, then %
	by \Cref{lem:GorensteinTensor} we have $f\sim \mu_{A,\epsilon}^{\smash{(d)}}$ where $\epsilon$ is a dual generator. We have in particular $V\cong A^*$ and hence, by \Cref{proposition:centroid_isomorphic_A},
	\[
	\dim (\cen{f})=\dim (\cen{\mu_{A,\epsilon}^{\smash{(d)}}})=\dim A = \dim V.
	\]
	Moreover, by \Cref{proposition_JJ26}, $f$ is $1$-generic, which by \Cref{proposition_HMV20} is equivalent to $\det\bigl(\Hess(f)\bigr)\neq 0$. %
\end{proof}

For a given polynomial $f$, we can compute its centroid using \Cref{lem:centroidHessian}. So \Cref{proposition:equivalence_Gorenstein_algebra}\ref{prop_equiv_item(1)} can be checked computationally, yielding a practical algorithm to verify whether or not a given polynomial is equivalent to the evaluation tensor of a Gorenstein algebra. We implemented this in Macaulay2, see \Cref{Appendix}.

\begin{example}\label{example:dim4table}
	We applied  \Cref{proposition:equivalence_Gorenstein_algebra}\ref{prop_equiv_item(1)} to the six polynomials in \cite[Theorem 10.4]{LT10}, which classifies minimal border rank polynomials for $n=4$. Two of them in fact do not meet the conditions: $x^{d-2}yz$ has vanishing Hessian (because it is not a concise tensor). For the polynomial
	\[
	f = x^{d-3}y^3+x^{d-2}z^2+x^{d-1}w
	\]
	we compute its Hessian $H$ to be
	\[
	\resizebox{1\textwidth}{!}{$
	\begin{pNiceMatrix}
		 (d-3)(d-4)x^{d-5}y^3 + (d-2)(d-3)x^{d-4}z^2 + (d-1)(d-2)x^{d-3}w & 3(d-3)x^{d-4} y^2 & 2(d-2)x^{d-3}z & (d-1)x^{d-2} \\
		 3(d-3)x^{d-4} y^2 & 6x^{d-3}y & 0 & 0 \\
		 2(d-2)x^{d-3}z & 0 & 2^{d-2} & 0 \\
		 (d-1)x^{d-2} & 0 & 0 & 0
	\end{pNiceMatrix}
	$}
	\]
	The determinant of $H$ is nonzero, but the equation $YH=H\transpose{Y}$ has a 3-dimensional space of solutions, so the  centroid of $f$ has dimension $3<4$. We conclude that $f$ is not an evaluation tensor, and hence does not have minimal border rank. The remaining 4 polynomials do satisfy the requirements, for the computation we refer to our code.
\end{example}

\appendix
\section{Code}\label{Appendix}
The Macaulay2 code accompanying this article is attached as auxiliary files.
\begin{itemize}
	\item \texttt{PolynomialsOfMinimalBorderRank.m2} is a Macaulay2 package, which can be loaded using \texttt{loadPackage "PolynomialsOfMinimalBorderRank"}. It provides an implementation of \Cref{algo:evaluationTensor} to construct the evaluation tensor of a given Gorenstein algebra, as well as an algorithm which checks whether a given polynomial is an evaluation tensor using \Cref{proposition:equivalence_Gorenstein_algebra}.
	\item \texttt{PolynomialsOfMinimalBorderRank\_List} contains the list of minimal border rank forms in up to $7$ variables, as presented in \Cref{tab:MainTable}. They are presented in a format independent of the degree $d$; in order to get the actual polynomials one needs to multiply them by a suitable power of $x_1$ and interpret the result in the divided power basis. 
	\item Similarly, \texttt{PolynomialsOfMinimalBorderRank\_List\_n} (for $n=8,9$) contain the lists of minimal border rank forms in $8$ (respectively $9$) variables. See \Cref{rmk:8_9_variables}.
	\item The three files whose name starts with \texttt{PolynomialsOfMinimalBorderRank\_MakeList} contain the Macaulay2 code used to the generate the aformentioned lists. The code essentially consists of applying our package to the classification of Gorenstein algebras in \cite{Casnati}.
	\item \texttt{PolynomialsOfMinimalBorderRank\_InfiniteFamily.m2} is used to compute the polynomial of \Cref{ex:elliptic}.
	\item \texttt{PolynomialsOfMinimalBorderRank\_Test.m2} illustrates \Cref{proposition:equivalence_Gorenstein_algebra} on the polynomials in \cite[Theorems 10.4 and 10.5]{LT10}, see \Cref{example:dim4table}.
\end{itemize}

\emergencystretch=1em
\printbibliography

@misc{AM69,
	author = {Atiyah, Michael F. and Macdonald, I. G.},
	title = {Introduction to commutative algebra},
	year = {1969},
	howpublished = {Reading, {Mass}.-{Menlo} {Park}, {Calif}.-{London}-{Don} {Mills}, {Ont}.: {Addison}-{Wesley} {Publishing} {Company} (1969)},
	zbMATH = {3279238},
	Zbl = {0175.03601}
}

@article{BB14,
	author = {Buczy{\'n}ska, Weronika and Buczy{\'n}ski, Jaros{\l}aw},
	title = {Secant varieties to high degree {Veronese} reembeddings, catalecticant matrices and smoothable {Gorenstein} schemes},
	fjournal = {Journal of Algebraic Geometry},
	journal = {J. Algebr. Geom.},
	issn = {1056-3911},
	volume = {23},
	number = {1},
	pages = {63--90},
	year = {2014},
}

@article{BB15,
	author = {Buczy{\'n}ska, Weronika and Buczy{\'n}ski, Jaros{\l}aw},
	title = {On differences between the border rank and the smoothable rank of a polynomial},
	fjournal = {Glasgow Mathematical Journal},
	journal = {Glasg. Math. J.},
	volume = {57},
	number = {2},
	pages = {401--413},
	year = {2015},
	}

@article{Ber+18,
	TITLE = {The hitchhiker guide to: secant varieties and tensor decomposition},
	AUTHOR = {Bernardi, A. and Carlini, E. and Catalisano, M.~V. and Gimigliano, A. and Oneto, A.},
	JOURNAL = {Mathematics},
	VOLUME = {6},
	NUMBER = {12},
	YEAR = {2018},
	PAGES = {Paper no.~314, 86 pp.},     
}

@article{BGI11,
	title={Computing symmetric rank for symmetric tensors},
	author={Bernardi, A. and Gimigliano, A. and Idà, M.},
	journal={J. Symbolic Comput.},
	YEAR={2011},
	volume={46},
	number={1},
	pages={34--53},
}

@article{BJ17,
	author = {Buczy{\'n}ski, Jaros{\l}aw and Jelisiejew, Joachim},
	title = {Finite schemes and secant varieties over arbitrary characteristic},
	fjournal = {Differential Geometry and its Applications},
	journal = {Differ. Geom. Appl.},
	issn = {0926-2245},
	volume = {55},
	pages = {13--67},
	year = {2017},
	doi = {10.1016/j.difgeo.2017.08.004},
	zbMATH = {6810581},
	Zbl = {1391.14094}
}

@article {BMW20,
	AUTHOR = {Brooksbank, P. A. and Maglione, J. and Wilson, J. B.},
	TITLE = {Exact sequences of inner automorphisms of tensors},
	JOURNAL = {J. Algebra},
	FJOURNAL = {Journal of Algebra},
	VOLUME = {545},
	YEAR = {2020},
	PAGES = {43--63},
}

@misc{CFJ25,
	author = {Canino, S. and Flavi, C. and Jelisiejew, J.},
	title = {Detecting {Direct} {Sums} of {Tensors} and {Their} {Limits}},
	year = {2025},
	howpublished = {preprint, {arXiv}:2512.05215 [math.{AG}]},
}

@article {CGLM08,
    AUTHOR = {Comon, P. and Golub, G. and Lim, L.-H. and Mourrain,
              B.},
     TITLE = {Symmetric tensors and symmetric tensor rank},
   JOURNAL = {SIAM J. Matrix Anal. Appl.},
  FJOURNAL = {SIAM Journal on Matrix Analysis and Applications},
    VOLUME = {30},
      YEAR = {2008},
    NUMBER = {3},
     PAGES = {1254--1279},
}

@incollection {CGO14,
	author = {Carlini, Enrico and Grieve, Nathan and Oeding, Luke},
	title = {Four lectures on secant varieties},
	booktitle = {Connections between algebra, combinatorics, and geometry. Selected papers based on the presentations at the workshop, Regina, Canada, May 29 -- June 1, 2012, the special session on interactions between algebraic geometry and commutative algebra, Regina, Canada, June 2--3, 2012 and the conference on further connections between algebra and geometry, Fargo, ND, USA, February 2--3, 2013},
	pages = {101--146},
	year = {2014},
	publisher = {New York, NY: Springer},
}

@article{CJN15,
	author = {Casnati, Gianfranco and Jelisiejew, Joachim and Notari, Roberto},
	title = {Irreducibility of the {Gorenstein} loci of {Hilbert} schemes via ray families},
	fjournal = {Algebra \& Number Theory},
	journal = {Algebra Number Theory},
	issn = {1937-0652},
	volume = {9},
	number = {7},
	pages = {1525--1570},
	year = {2015},
	doi = {10.2140/ant.2015.9.1525},
	zbMATH = {6493522},
	Zbl = {1349.14011}
}

@article{Com94,
	author={Comon, P.},
	title={Independent component analysis, a new concept?},
	journal={Signal Process.},
	volume={36},
	YEAR={1994},
	pages={287--314},
}

@misc{DM26,
	title={Nonlinear methods for tensors: determinantal equations for secant varieties beyond cactus}, 
	author={Matěj Doležálek and Mateusz Michałek},
	year={2026},
	howpublished = {preprint, {arXiv}:2602.12762 [math.{AG}]},  
}

@book{Eis95,
	author = {Eisenbud, David},
	title = {Commutative algebra. {With} a view toward algebraic geometry},
	fseries = {Graduate Texts in Mathematics},
	series = {Grad. Texts Math.},
	issn = {0072-5285},
	volume = {150},
	isbn = {3-540-94269-6; 3-540-94268-8},
	year = {1995},
	publisher = {Berlin: Springer-Verlag},
	zbMATH = {704831},
	Zbl = {0819.13001}
}

@article {Fla25a,
	AUTHOR = {Flavi, C.},
	TITLE = {Decompositions of powers of quadrics},
	YEAR = {2025},
	JOURNAL = {Diss. Math.},
	VOLUME = {602},
	PAGES = {108 pp.}
}

@article{Gre+02,
	author = {Grellier, Olivier and Comon, Pierre and Mourrain, Bernard and Tr{\'e}buchet, Philippe},
	title = {Analytical blind channel identification},
	fjournal = {IEEE Transactions on Signal Processing},
	journal = {IEEE Trans. Signal Process.},
	volume = {50},
	number = {9},
	pages = {2196--2207},
	year = {2002},
}

@article {Hou77,
	AUTHOR = {Ho{\"u}el, M.~J.},
	TITLE = {Mélanges du role de l'expérience dans les sciences exactes},
	JOURNAL = {J. Math. Élém.},
	VOLUME = {1},
	YEAR = {1877},
	LANGUAGE = {French},
	PAGES = {118--128},
}

@article {HMV20,
	AUTHOR = {Huang, Hang and Micha\l ek, Mateusz and Ventura, Emanuele},
	TITLE = {Vanishing {H}essian, wild forms and their border {VSP}},
	JOURNAL = {Math. Ann.},
	FJOURNAL = {Mathematische Annalen},
	VOLUME = {378},
	YEAR = {2020},
	NUMBER = {3-4},
	PAGES = {1505--1532},
}

@misc{Jel22,
	author = {Joachim Jelisiejew},
	title = {Hilbert schemes of points and applications},
	year = {2022},
	howpublished = {preprint, {arXiv}:2205.10584 [math.{AG}]},
	arXiv = {arXiv:2205.10584}
}

@incollection{Jel24,
	author = {Jelisiejew, Joachim},
	title = {Open problems in deformations of {Artinian} algebras, {Hilbert} schemes and around},
	booktitle = {Deformation of Artinian algebras and Jordan type. AMS-EMS-SMF special session, Universit\'e Grenoble Alpes, Grenoble, France, July 18--22, 2022},
	pages = {3--25},
	year = {2024},
	publisher = {Providence, RI: American Mathematical Society (AMS)},
	}

@misc{JJ26,
	title={Unrestrictions and concise secant varieties}, 
	author={Jagie{\l}{\l}a, Jakub and Jelisiejew, Joachim},
	year={2026},
	howpublished = {preprint, {arXiv}:2604.24879 [math.{AG}]}, 
}

@article{JLP24,
	author = {Jelisiejew, Joachim and Landsberg, J. M. and Pal, Arpan},
	title = {Concise tensors of minimal border rank},
	fjournal = {Mathematische Annalen},
	journal = {Math. Ann.},
	issn = {0025-5831},
	volume = {388},
	number = {3},
	pages = {2473--2517},
	year = {2024},
	doi = {10.1007/s00208-023-02569-y},
	zbMATH = {7808055}
}

@Book{Landsberg12,
	author    = {Landsberg, J. M.},
	publisher = {Providence, RI: American Mathematical Society (AMS)},
	title     = {Tensors: geometry and applications},
	year      = {2012},
	isbn      = {978-0-8218-6907-9},
	series    = {Grad. Stud. Math.},
	volume    = {128},
	fseries   = {Graduate Studies in Mathematics},
	issn      = {1065-7339},
	zbl       = {1238.15013},
	zbmath    = {5968745},
}

@book{Lan17,
	author = {Landsberg, J. M.},
	title = {Geometry and complexity theory},
	fseries = {Cambridge Studies in Advanced Mathematics},
	series = {Camb. Stud. Adv. Math.},
	volume = {169},
	year = {2017},
	publisher = {Cambridge: Cambridge University Press},
}

@book{Leb59,
	title={Exercices d'analyse numérique},
	author={Lebesgue,V.~A.},
	language={French},
	YEAR={1859},
	publisher={Leiber et Faraguet, Éditeurs},
	address={Paris},
}

@article {LO13,
	AUTHOR = {Landsberg, J. M. and Ottaviani, G.},
	TITLE = {Equations for secant varieties of {V}eronese and other varieties},
	JOURNAL = {Ann. Mat. Pura Appl. (4)},
	FJOURNAL = {Annali di Matematica Pura ed Applicata. Series IV},
	VOLUME = {192},
	YEAR = {2013},
	NUMBER = {4},
	PAGES = {569--606},
}

@article{LT10,
	author   = {Landsberg, J. M. and Teitler, Zach},
	journal  = {Found. Comput. Math.},
	title    = {On the ranks and border ranks of symmetric tensors},
	year     = {2010},
	issn     = {1615-3375},
	number   = {3},
	pages    = {339--366},
	volume   = {10},
	doi      = {10.1007/s10208-009-9055-3},
	fjournal = {Foundations of Computational Mathematics},
	zbl      = {1196.15024},
	zbmath   = {5710612},
}

@article{Luc76,
	title={Sur la théorie des nombres},
	author={Lucas,E.},
	journal={Nouv. Corresp. Math.},
	language={French},
	YEAR={1876},
	volume={2},
	pages={101--105},
}

@misc{MS,
	author = {Grayson, Daniel R. and Stillman, Michael E.},
	title = {Macaulay2, a software system for research in algebraic geometry},
	howpublished = {Available at \url{http://www.macaulay2.com}}
}

@book{Mum95,
    AUTHOR = {Mumford, D.},
     TITLE = {Algebraic geometry, {I}},
    series = {Classics in Mathematics},
  subtitle = {Complex projective varieties},
      NOTE = {reprint of the 1976 edition},
 publisher = {Springer-Verlag},
   address = {Berlin},
      YEAR = {1995},
}

@misc{MV24,
	title={Symmetrization maps and minimal border rank Comon's conjecture}, 
	author={Tomasz Mańdziuk and Emanuele Ventura},
	year={2024},
	howpublished = {preprint, {arXiv}:2411.05721 [math.{AG}]} 
}

@article{Myasnikov,
	author = {Myasnikov, A. G.},
	title = {Definable invariants of bilinear mappings},
	fjournal = {Siberian Mathematical Journal},
	journal = {Sib. Math. J.},
	volume = {31},
	number = {1},
	pages = {89--99},
	year = {1990},
	note = {Translation from Sib. Mat. Zh. 31, No. 1(179), 104-115 (1990)},
}

@article{SGB00,
	author={Sidiropoulos, N.~D. and Giannakis, G.~B. and Bro, R.},
	title={Blind parafac receivers for ds-cdma systems},
	journal={IEEE Trans. Signal Process.},
	volume={48},
	YEAR={2000},
	pages={810--823},
}

@article{Sid+17,
	author = {Sidiropoulos, Nicholas D. and de Lathauwer, Lieven and Fu, Xiao and Huang, Kejun and Papalexakis, Evangelos E. and Faloutsos, Christos},
	title = {Tensor decomposition for signal processing and machine learning},
	fjournal = {IEEE Transactions on Signal Processing},
	journal = {IEEE Trans. Signal Process.},
	volume = {65},
	number = {13},
	pages = {3551--3582},
	year = {2017},
}

@article{Str83,
	author = {Strassen, V.},
	title = {Rank and optimal computation of generic tensors},
	journal = {Linear Algebra Appl.},
	volume = {52},
	YEAR = {1983},
	pages = {645--685},
}

@inproceedings{Val01,
	AUTHOR = {Valiant, L.~G.},
	TITLE = {Quantum computers that can be simulated classically in polynomial time},
	BOOKTITLE = {\textit{Proceedings of the Thirty-Third Annual ACM Symposium on Theory
	of Computing} ({H}ersonissos, 2001), {ACM}, {N}ew {Y}ork, {H}ersonissos},
	YEAR = {2001},
	PAGES = {114--123},
}

@book{Wig19,
	author = {Wigderson, Avi},
	title = {Mathematics and computation. {A} theory revolutionizing technology and science},
	year = {2019},
	publisher = {Princeton, NJ: Princeton University Press},
}

@article {Wil12,
	AUTHOR = {Wilson, J. B.},
	TITLE = {Existence, algorithms, and asymptotics of direct product decompositions, {I}},
	JOURNAL = {Groups Complex. Cryptol.},
	FJOURNAL = {Groups. Complexity. Cryptology},
	VOLUME = {4},
	YEAR = {2012},
	NUMBER = {1},
	PAGES = {33--72},
}

@article{Casnati,
	author = {Casnati, Gianfranco},
	title = {Isomorphism types of {Artinian} {Gorenstein} local algebras of multiplicity at most 9},
	fjournal = {Communications in Algebra},
	journal = {Commun. Algebra},
	issn = {0092-7872},
	volume = {38},
	number = {8},
	pages = {2738--2761},
	year = {2010},
}

@unpublished{JelBedlewo,
	author = {Jelisiejew, Joachim},
	title = {Moduli of tensors},
	year = {2025},
	note = {Notes from the MFO Graduate Seminar 2025. In preparation}
}
\end{document}